\DeclarePairedDelimiter{\ceil}{\lceil}{\rceil}
\newcommand\blfootnote[1]{%
  \begingroup
  \renewcommand\thefootnote{}\footnote{#1}%
  \addtocounter{footnote}{-1}%
  \endgroup
}
\newtheorem{lemma}{Lemma}
\newcommand {\bss}  { {\bf s} }
\newcommand {\rr}  { {\bf r} }
\newcommand {\qq}  { {\bf q} }
\newcommand {\pp}  { {\bf p} }
\newcommand {\dd}  { {\bf d} }
\newcommand {\ee}  { {\bf e} }
\newcommand*\lin[1]{\langle #1\rangle}
\newtheorem{theorem}{Theorem}
\newtheorem{remark}{Remark}
\newcommand{\G}{\mathcal{G}}
\newcommand{\V}{\mathcal{V}}
\begin{document}

\title{Variational Perspective on Local Graph Clustering\blfootnote{A preliminary version of this work appeared with the title ``Exploiting Optimization for Local Graph Clustering'' as a technical report \cite{FCSRM16_TR}.}}

%\subtitle{Do you have a subtitle?\\ If so, write it here}

%\titlerunning{Short form of title}        % if too long for running head

\author{
        Kimon~Fountoulakis \and
        Farbod~Roosta-Khorasani \and
        Julian~Shun\and
        Xiang~Cheng \and
        Michael~W.~Mahoney
}

%\authorrunning{Short form of author list} % if too long for running head

%\institute{Kimon~Fountoulakis \at
%					Department of Statistics, UC Berkeley, Berkeley, CA 94720\\
%					\email{kfount@berkeley.edu}           %  \\
%					\and
%					Farbod~Roosta-Khorasani \at
%					Department of Statistics, UC Berkeley, Berkeley, CA 94720\\
%					\email{farbod@stat.berkeley.edu}   
%					 \and
%					Julian~Shun \at
%					Department of Electrical Engineering and Computer Science, UC Berkeley, Berkeley, CA, 94720\\
%					\email{jshun@eecs.berkeley.edu}   
%					\and
%          	 			Xiang~Cheng \at
%					Department of Electrical Engineering and Computer Science, UC Berkeley, Berkeley, CA, 94720\\
%					\email{x.cheng@berkeley.edu}
%					\and
%					Michael~W.~Mahoney \at
%					Department of Statistics, UC Berkeley, Berkeley, CA 94720\\
%					\email{mmahoney@stat.berkeley.edu}   
%}

\date{December 4, 2017}
% The correct dates will be entered by the editor

\maketitle

\begin{abstract}
Modern graph clustering applications require the analysis of large graphs and this can be computationally expensive. In this regard, local spectral graph clustering methods aim to identify well-connected clusters around a given ``seed set'' of 
reference nodes without accessing the entire graph. The celebrated Approximate Personalized PageRank (APPR) algorithm in the seminal paper by Andersen et al.\ \cite{ACL06} is one such method. 
APPR was introduced and motivated purely from an algorithmic perspective. In other words, there is no a priori notion of objective function/optimality conditions that characterizes the steps taken by APPR. Here, we derive a novel variational formulation which makes explicit the actual optimization problem solved by APPR. In doing so, we draw connections between the local spectral algorithm of~\cite{ACL06} and an iterative shrinkage-thresholding algorithm (ISTA). 
In particular, we show that, appropriately initialized ISTA applied to our variational formulation can recover the sought-after local cluster in a time that only depends on the number of non-zeros of the optimal solution instead of the entire graph. In the process, we show that an optimization algorithm which apparently requires accessing the entire graph, can be made to behave in a completely local manner by accessing only a small number of nodes. This viewpoint builds a bridge across two seemingly disjoint fields of graph processing and numerical optimization, and it allows one to leverage well-studied, numerically robust, and efficient optimization algorithms for processing today’s large graphs. 

%\keywords{Local Spectral Graph Clustering \and Variational Formulation \and Approximate Personalized PageRank \and Iterative Shrinkage-Thresholding}
% \PACS{PACS code1 \and PACS code2 \and more}
%\subclass{05C85    \and 90C35     \and 65K10   }

\end{abstract}

\section{Introduction}
\label{sec:intro}
Modern graph clustering applications require the analysis of large graphs \cite{JBPMM15,LLDM11}. However, in many cases, large sizes of recent graph data have rendered the applications of classical ``global'' approaches, i.e., those that require access to the entire graph, e.g., \cite{ARV09,GS06,H70,LR88,PSL90}, rather impractical. The requirement to access the entire graph is indeed very undesirable. This is so since, the running time of these global algorithms typically increases with the size of the entire graph. 
This computational challenge sparked the development of more recent methods \cite{ACL06,AL08,ST13,OZ14,KG14, veldticml2016} that are \emph{local} and only require access to a small portion of the graph. More specifically, given a ``target'' cluster, such local methods find a ``nearby'' cluster that sufficiently overlaps with the target and also has certain similar mathematical properties.
Unlike global methods, the running time of these local alternatives depends only on the size of the output cluster or on the size of an input seed set of reference nodes, both of which can be significantly smaller than the entire graph.\
This property makes local graph clustering methods more applicable for today's large-scale graphs. In addition, many real-world graphs tend to have ``good'' small/medium size local clusters, as opposed to ``good'' large ones \cite{LLDM11,JBPMM15}, making the application of such local algorithms even more appealing in practice.\footnote{In between global and local algorithms, there is a class of \emph{locally-biased algorithms}, e.g.,~\cite{MOV12}, whose running time depends on the entire graph, however, the solution is locally-biased toward some input seed set of reference nodes. We don't consider them in this paper.}

Approximate Personalized PageRank (APPR) algorithm, first introduced in the seminal paper~\cite{ACL06}, has been the cornerstone of local spectral graph clustering algorithms.
APPR is a semi-supervised approximation algorithm for finding local partitions in a graph, and it does so by approximately solving the PageRank linear system, followed by rounding the approximate solution (see Section \ref{sec:background} for more details). Heuristic modifications of APPR have also been proposed which have successfully aimed at improving its performance, e.g., those that use different rules to update the iterates and/or to terminate iterations~\cite{GM14_ICML}.
%Heuristic modifications of APPR such as those that use different termination criteria \fred{The following statement about probability mass is too detailed for introduction and I suggest removing it from here} or different rules for pushing probability mass to neighbors are applied in practice in order to achieve improved performance (\textcolor{black}{xxx, @Michael, any citations? I got this form the anti-differentiation paper}). 
However, APPR was introduced and motivated purely from an algorithmic perspective. As a result, its output is solely determined by the operations of the algorithm applied to the data. In other words, there is no a priori notion of objective function/optimality conditions that characterizes the steps taken by APPR. 
As a result, it is often difficult to precisely quantify how such heuristic modifications affect the theoretical guarantees and the running time of APPR. \emph{Our main objective here is to bridge this gap between APPR's theory and its heuristic modifications}. We do this by finding the \emph{explicit variational formulation} of the local graph clustering problem, which is only implicitly considered in APPR. This viewpoint indeed decouples the combinatorial properties of the graph from the characteristics of the optimization algorithm used to solve the new formulation. More importantly, we will demonstrate that by using a popular optimization algorithm, namely iterative shrinkage-thresholding algorithm (ISTA),~\cite{sra2012optimization}, and with proper initialization, one can indeed  guarantee similar local properties as those of APPR. The ``big-picture'' objective of this work is to build a bridge between two seemingly disjoint fields of graph processing and numerical optimization. It is hoped that once this viewpoint is extended to other graph processing problems, faster and more efficient algorithms emerge as a result.

In light of the aforementioned goals, our contributions can be summarized as follows. In comparison to APPR in which the properties of the local/sparse solutions and those of the employed algorithms are tightly coupled, we propose a variational formulation in the form of $\ell_1$-regularized PageRank (PR) that decouples the locality/sparsity of the solution from properties of the algorithm. In other words, if there exists a local solution for the original clustering problem, then any optimization algorithm applied to the proposed variational formulation outputs the same local solution. We then make explicit why the optimality conditions of the proposed $\ell_1$-regularized PageRank problem imply the special termination criterion of APPR, and thus its solution provides the same combinatorial guarantees as in~\cite{ACL06}. 

Although any optimization method applied to our proposed formulation naturally produces the same output, what differentiates between them is their running time. As a result, we present an algorithm based on iterative shrinkage-thresholding algorithm (ISTA)~\cite{bt09} that solves the $\ell_1$-regularized PR problem, while maintaining a running time in the order of the volume of nodes/non-zeros in the optimal solution (i.e., \emph{independent} of the size of the graph). We show that the considered algorithm only requires access to the graph in a localized manner, and hence enjoys similar locality properties as the original APPR. 
%Compared to APPR, we show that ISTA with a \emph{particularly chosen initialization} enjoys a running time which exhibits better dependence on PageRank parameters.

%Let $\mathcal{S}_*$ be the set of non-zero nodes in the solution of the $\ell_1$-regularized PR problem.
%By exploiting strong convexity of the $\ell_1$-regularized PR we prove that the running time of \emph{a particularly initialized} ISTA is $$\mathcal{O}\left(1/(\alpha + \frac{1-\alpha}{2}\lambda_{min}(\mathcal{L}_{\mathcal{S}_*}))\right)$$ where $\alpha$ is the teleportation parameter of the PageRank linear system and $\lambda_{min}(\mathcal{L}_{\mathcal{S}_*})> 0$ also depends on $\alpha$; details are given in Remark \ref{remark:1}. 
%This result is an improvement over the standard analysis for the running time of APPR which has $\mathcal{O}\left(1/\alpha\right)$ dependence on $\alpha$.s
Finally, by taking advantage of the local nature of iterations, we carefully implement the proposed algorithm in C++ and illustrate a few numerical experiments on several large-scale real graphs.

The rest of this paper is organized as follows. Notation used throughout the paper is introduced in Section~\ref{section:preliminaries}. Section~\ref{sec:background} provides a brief introduction to APPR and, in doing so, motivates our intentions in this paper. Our variational formulation is derived in Section~\ref{subsec:modacl}. The application of ISTA for solving this variational formulation is considered in Section~\ref{sec:ista_alg}. This is then followed by numerical simulations on a few real graph data in Section~\ref{sec:examples}. Conclusions and further thoughts are gathered in Section~\ref{sec:conclusion}.

\section{Notation and assumptions}
\label{section:preliminaries}
Throughout the paper, vectors are denoted by bold lowercase letters, e.g., $\qq$, and matrices are denoted by regular upper case letters, e.g., $A$.
The $i^{th}$ coordinate of a vector $\qq$ is denoted by $\qq(i)$ or $[\qq]_{i}$, depending on which is less cumbersome in a given formula. Iteration counter is denoted by $k$ and is placed as subscripts, e.g., $\qq_{k}$ denotes the vector corresponding to $k^{th}$ iteration.The dot-product between two vectors is denoted by $\lin{\pp,\qq} = \pp^{T} \qq$.
The vector of all ones and the vector whose $i^{th}$ coordinate is one and zero elsewhere are denoted by $\ee$ and $\ee_{i}$, respectively.
The square root of a vector is taken component-wise, i.e., $\qq^{1/2}:=[\qq(1)^{1/2},\dots,\qq(n)^{1/2}]$. 

We assume that we are given an undirected graph $\mathcal{G}$ with no self-loops, whose number of nodes and edges 
are denoted by $n$ and $m$, respectively. 

The set of nodes of the graph is denoted by $\V$. By $j\sim i$ we mean that $j$ is a neighbor of $i$ and vice-versa. For a set of nodes $S$, the relation $j\sim S$ indicates that a node $j$ is a neighbor of at least one node in $S$, $\mbox{vol}(S):= \sum_{i\in S} d_i$ and $d_i$ is the number of edges of node $i$, i.e., the degree of node $i$. We reserve $\dd$ to be the vector whose components are degrees of the nodes, i.e., $\dd(i) = d_{i}$.
Matrices $A$ and $D$ denote, respectively, the adjacency matrix and the diagonal degree matrix 
of $\mathcal{G}$. Recall that the $i^{th}$ diagonal element of $D$ is given by $d_i$. 
For $$Q:= D^{-1/2} \left\{D - \frac{1-\alpha}{2}(D + A)\right\}D^{-1/2},$$ we define
\begin{equation}
f(\qq):=\frac{1}{2} \lin{\qq, Q \qq} - \alpha \lin{\bss, D^{-1/2} \qq},
\label{eq:f}
\end{equation}
\textcolor{black}{where $\bss$ is a given distribution over the nodes also known as teleportation distribution}.\
% $f(\qq):=\frac{1}{2}\qq^T Q \qq - \alpha \bss^T D^{-1/2} \qq$.
For $S\subseteq [n]$ where $[n]= \{1,2,\dots,n\}$, let $I_S\in\mathbb{R}^{n\times |S|}$ be 
a $\mathbb{R}^{n\times |S|}$ matrix whose columns are taken from those of the $\mathbb{R}^{n\times n}$ identity matrix indexed by $S$.\ 
Further, we define $\nabla_S f(\qq) := I_S^T \nabla f(\qq)$, $Q_S := I_S^T Q I_S$, and $\dd_{S} := \text{diag}(I_S^T D I_S)$, where ``$\text{diag}(\cdot)$'' extracts the diagonal of the input matrix and returns it as a vector.\
%One can easily see that function $\nabla f$ is block $(1+\alpha)/2$-Lipschitz continuous and the function $f$ is $\alpha$-strongly-convex, both w.r.t. $\ell_2$ norm. (\textcolor{black}{need to improve the strong convexity parameter}).
We also define the support set of a vector $\qq$ as the index set of its non-zero elements, i.e., $\mbox{supp}(\qq):=\{i\in [n] \ | \ \qq(i)\neq 0\}$.
\textcolor{black}{One can easily see that function $\nabla f$ is 1-Lipschitz continuous w.r.t. $\ell_2$ norm,
that is, the largest eigenvalue of $Q$ is smaller or equal to $1$.
To prove this note that $Q = \alpha I  + \frac{1-\alpha}{2}\mathcal{L}$, where $\mathcal{L} = I - D^{-1/2}AD^{-1/2}$ is the symmetric normalized Laplacian matrix.
Using the fact that the largest eigenvalue of $\mathcal{L}$ is bounded by $2$ and the latter definition of $Q$ we obtain the result.
Furthermore, note that this condition implies that $\forall \pp,\qq \in \mathbb{R}^{n}$
\begin{equation*}
\|\nabla f(\pp) - \nabla f(\qq) \|_{2} \le \|\pp - \qq\|_{2},
\end{equation*}
which also implies
\begin{equation*}
f(\pp) \le f(\qq) + \lin{\nabla f(\qq),\pp-\qq} + \frac{1}{2}\|\pp - \qq\|_2^2.
\end{equation*}
}

\section{Background and Motivation}
\label{sec:background}
%in a search engine's results in order to distinguish highly referenced websites  from those that were less popular. 
Suppose $n$ denotes the total number of nodes. \textcolor{black}{A simplified version of PageRank (PR) algorithm}~\cite{page1999pagerank} amounts to computing the stationary solution of 
$$
\pp_{k+1}(j) = \sum_{i\sim j} \pp_k(i)/d_i,
$$
where each node is modeled as a node of a graph, and the components of the vector $\pp \in \mathbb{R}^{n}$ represent the ``popularity'' of these $n$ nodes. Usually the ``popularity'' is encoded as a probability mass distributed over all the nodes, i.e., the vector $\pp$ is like a probability mass function where $\pp \geq \bold 0$ and $\ee^T\pp = 1$ .  
As a result, operationally, the \textcolor{black}{simplified PR algorithm} iteratively transfers probability mass around the graph by adding to a node's assigned probability and taking the equivalent amount from its neighbors. \textcolor{black}{The stationary vector corresponding to this iterative operation is the degrees vector $\dd$}. In Linear Algebra's jargon, \textcolor{black}{the above simplified version of the PR algorithm} amounts to the computation of the principal eigenvector of a large and sparse matrix, $AD^{-1}$, often referred to as transition matrix, i.e.,
$$
AD^{-1} \pp = \pp.
$$

\textcolor{black}{This simplified version of the PR algorithm} has several disadvantages. A particular issue arise when some node is isolated and lacks edges to other nodes, in which case, the above procedure is not well-defined, i.e., the node's degree is zero. This type of nodes are often referred to as ``dangling nodes'' and an elegant way to handle such situations was proposed in \cite{EMT2004}. As a result, for simplicity's sake, we assume that the dangling nodes are dealt with in a proper way and hence, $d_i>0, \; \forall i \in [n]$. 

The second disadvantage is that the convergence to the principal eigenvector of $AD^{-1}$ requires the transition matrix to be aperiodic and irreducible, i.e., the smallest eigenvalue of $AD^{-1}$ is in absolute value less than $1$, and matrix $(AD^{-1})^t$ is component-wise positive for some $t$. 
The former issue can be resolved by considering the lazy random walk matrix, $W = (I + A D^{-1})/2$ instead of $AD^{-1}$, while for the latter, one can consider a convex combination of the form
\begin{equation}\label{eq:convexPPRmat}
\alpha \bss \ee^{T}+ (1-\alpha) W,
\end{equation}
where $\alpha\in(0,1)$ is the ``teleportation'' parameter and $\bss$ is a given distribution over the nodes also known as teleportation distribution. 
\textcolor{black}{The principal eigenvector of matrix \eqref{eq:convexPPRmat} is known as the PR vector~\cite{page1999pagerank}}.
The celebrated PageRank (PR) vector was initially developed in~\cite{page1999pagerank} to rank websites/nodes according to their ``popularity''.

Initially, $\bss$ was set to have uniform probability distribution over all the nodes.
However, ``personalized'' distributions became popular \cite{G15} which assign non-uniform probability mass in favor of certain nodes and, as a result, one seeks to obtain personalized principal eigenvectors of matrix \eqref{eq:convexPPRmat}. 
For example, after arbitrarily ordering the nodes of $\G$, consider an input node, say $i$, and a vector $\bss\in\mathbb{R}^n$ such that $\bss(i)=1$ and zero elsewhere. For a lazy random walk matrix, $W = (I + A D^{-1})/2$, finding the principal eigenvector of \eqref{eq:convexPPRmat} which also satisfies $\ee^T \pp = 1$ and $\pp \ge \bold 0$, is equivalent to the solution of the linear system
\begin{equation}
\label{eq:ppr}
\pp = \alpha \bss + (1-\alpha)W \pp.
\end{equation}
This approach is known as Personalized PageRank (PPR), and in fact, has become the ubiquitous tool for ranking web pages, social and information network analysis, recommendation systems, analysis of biology, neuroscience and physics networks; see~\cite{G15} for an excellent review of PR and PPR as well as their applications. 

Approximate Personalized PageRank (APPR), was first introduced in the seminal work of~\cite{ACL06}. As it appears from its name, APPR is an approximate version of PPR which boils down to approximately solving the linear system~\eqref{eq:ppr} using a particular iterative scheme and a specifically chosen early stopping criterion. 
In fact, it can be shown that APPR's original algorithm is, indeed, an iterative coordinate solver for the linear system~\eqref{eq:ppr}.
%APPR solves approximately the PageRank linear system
%\begin{equation}\label{eq:ppr}
%\pp = \alpha s + (1-\alpha)Wp,
%\end{equation}
%where $\pp$ is the unknown vector
%and $W = (I + AD^{-1})/2$ is a lazy random walk matrix. 
To see this, let us first define the residual vector as $\rr := (I - (1-\alpha)W)\pp - \alpha \bss$.
An iterative coordinate solver applied to \eqref{eq:ppr} updates the current 
approximate solution at iteration $k$ according to $\pp_{k+1} = \pp_k - \rr_k(i) \ee_i$.
%\begin{algorithm}[tb]
%\caption{Coordinate Solver for \eqref{eq:ppr}}\label{acl:cd}
%\begin{algorithmic}[1]
%\STATE \textbf{Initialize}: $\rho>0$, $p_0 = 0$, thus $r_0 = - \alpha \bss$
%\WHILE {$\|D^{-1}r_k\|_\infty > \alpha \rho$} 
%\STATE Choose any coordinate $i$ such that $r_k(i) \le -\alpha d_i\rho$
%\STATE $\pp_{k+1}(i) = p_k(i) - r_k(i)$ 
%\STATE $r_{k+1} = (I - (1-\alpha)W)\pp_{k+1} - \alpha \bss.$
%\STATE $k = k + 1$
%\ENDWHILE
%\STATE \textbf{return} $p_k$
%%\EndProcedure
%\end{algorithmic}
%\end{algorithm}
As a result, the residual vector has the following recursive representation
\begin{equation}
\label{eq:ppr_residual}
\rr_{k+1} = \rr_k - \rr_k(i) \ee_i + \frac{1-\alpha}{2}(I + AD^{-1})\rr_k(i) \ee_i. 
\end{equation}
Algorithm~\ref{acl:cd2} gives an overview of such iterative coordinate solver with a particular stopping criterion. From the definitions of $D$ and $A$, it can easily be seen that Steps $5$, $6$, and $7$ practically implement the recursive relation~\eqref{eq:ppr_residual}.
%\begin{itemize}
%\item[-] For a coordinate $j\neq i$, if for the chosen $i$'th coordinate in step $4$ in Algorithm \ref{acl:cd} we have that $(i,j)\in \mathcal{E}$, then
%$$
%r_{k+1}(j)= r_k(j) + \frac{1-\alpha}{2d_i}r_k(i).
%$$
%\item[-] For $j=i$ we have that
%$$
%r_{k+1}(j)= \frac{1-\alpha}{2}  r_k(i).
%$$
%\item[-] For a coordinate $j\neq i$, if for the chosen $i$'th coordinate in step $4$ in Algorithm \ref{acl:cd} we have that $(i,j)\notin \mathcal{E}$, then
%$$
%r_{k+1}(j)= r_k(j) . 
%$$
%\end{itemize}
\begin{algorithm}[htb]
\caption{Coordinate solver (APPR) for \eqref{eq:ppr}}
\begin{algorithmic}[1]
\STATE \textbf{Initialize}: $\rho>0$, $\pp_0 = \bold{0}$, thus $\rr_0 = - \alpha \bss$
\WHILE {$\|D^{-1} \rr_k\|_\infty > \rho \alpha $} \vspace{0.1cm}
\STATE Choose an $i$ such that $\rr_k(i) < -\alpha d_i\rho$
\STATE $\pp_{k+1}(i) = \pp_k(i) - \rr_k(i)$ 
\STATE $\rr_{k+1}(i) = \frac{1-\alpha}{2}  \rr_k(i)$
\STATE For each $j$ such that $j\sim i$ set
$$
\rr_{k+1}(j) = \rr_k(j)+ \frac{1-\alpha}{2d_i} A_{ij}\rr_k(i)
$$
\STATE For each $j$ such that $j \nsim i$ set $\rr_{k+1}(j)  = \rr_k(j)$
\STATE $k = k + 1$
\ENDWHILE
\STATE \textbf{return} $\pp_k$
%\EndProcedure
\end{algorithmic}
\label{acl:cd2}
\end{algorithm}

Now, by defining $\tilde{\rr}_k := - (1/\alpha) \rr_k$ and replacing $\rr_k$ with $\tilde{\rr}_k$ in Algorithm \ref{acl:cd2} we obtain APPR algorithm in exactly the same form as described in \cite[Section 3]{ACL06}. This indeed shows that APPR is an iterative coordinate solver for the PPR linear system \eqref{eq:ppr}.
%\begin{algorithm}
%\caption{Coordinate solver for \eqref{eq:ppr}}\label{acl:cd3}
%\begin{algorithmic}[1]
%\STATE \textbf{Initialize}: $\rho>0$, $p_0 = 0$, thus $\tilde{r}_0 = - \bss$ \vspace{0.1cm}
%\WHILE {$\|D^{-1}\tilde{r}_k\|_\infty > \rho$} \vspace{0.1cm}
%\STATE Choose any coordinate $i$ where $\tilde{r}_k(i) \ge \rho d_i$\vspace{0.1cm}
%\STATE $\pp_{k+1}(i) = p_k(i) + \alpha \tilde{r}_k(i)$  \vspace{0.1cm}
%\STATE $\tilde{r}_{k+1}(i)= \frac{1-\alpha}{2}  \tilde{r}_{k}(i)$\vspace{0.1cm}
%\STATE For each $j$ such that $(i,j)\in\mathcal{E}$ calculate
%$$
%\tilde{r}_{k+1}(j)= \tilde{r}_{k}(j) + \frac{1-\alpha}{2d_i}\tilde{r}_{k}(i)
%$$
%\STATE For each $j$ such that $(i,j)\notin \mathcal{E}$
%$$
%\tilde{r}_{k+1}(j)= \tilde{r}_{k}(j)
%$$
%\STATE $k = k + 1$
%\ENDWHILE
%\STATE \textbf{return} $p_k$
%%\EndProcedure
%\end{algorithmic}
%\end{algorithm}

%To show that Algorithm \ref{acl:cd2} is equivalent to a coordinate descent algorithm we need to 
%define an optimization problem.
\begin{algorithm}[htb]
\caption{Coordinate descent solver for ``$\min f(\qq)$''}
\begin{algorithmic}[1]
\STATE \textbf{Initialize}: $\rho>0$, $\qq_0 = \bold{0}$, thus $\nabla f(\qq_0) = - \alpha D^{-1/2} \bss$ \vspace{0.1cm}
\WHILE {$\|D^{-1/2}\nabla f(\qq_k)\|_\infty > \rho \alpha $} \vspace{0.1cm}
\STATE Choose an $i$ such that $\nabla_i f(\qq_k) < -\alpha \rho d^{1/2}_i$
\STATE $\qq_{k+1}(i) = \qq_k(i) - \nabla_i f(\qq_k)$  \vspace{0.1cm}
\STATE $\nabla_i f(\qq_{k+1})=\frac{1-\alpha}{2} \nabla_i f(\qq_k)$\vspace{0.1cm}
\STATE For each $j$ such that $j\sim i$ set
$$
\nabla_j f(\qq_{k+1})= \nabla_j f(\qq_k)+ \frac{(1-\alpha)}{2d^{1/2}_id^{1/2}_j} A_{ij}\nabla_i f(\qq_k)
$$
\STATE For each $j$ that $j\nsim i$ set $\nabla_j f(\qq_{k+1}) = \nabla_j f(\qq_k)$
\STATE $k = k + 1$
\ENDWHILE
\STATE \textbf{return} $\pp_k:=D^{1/2}\qq_k$
\end{algorithmic}
\label{acl:cd_ppr}
\end{algorithm}

It is, in fact, easy to see that Algorithm~\ref{acl:cd2} solves the optimization problem  ``$\min f(\qq)$'',
where $f$ is defined as in~\eqref{eq:f}.\ To see this,
%Let us now show that APPR is equivalent to a coordinate descent method for minimizing function $f$.
note that the residual in Algorithm \ref{acl:cd2} can be written in terms of the scaled gradient of function $f$.\
In particular, since
\begin{equation*}
\nabla f(\qq) = D^{-1/2} \left\{D - \frac{1-\alpha}{2}(D + A)\right\}D^{-1/2}\qq - \alpha D^{-1/2} \bss,
\end{equation*} 
we have $D^{1/2} \nabla f(\qq) = \rr$,\ where $\qq := D^{-1/2}\pp$.\
Using $D^{1/2} \nabla f(\qq) = \rr$ we can rewrite Algorithm \ref{acl:cd2} as a coordinate descent method for minimizing $f$ as in Algorithm \ref{acl:cd_ppr}. 

The above simple observation is a motivating factor behind our objective of deriving the exact variational formulation of APPR. However, before delving into the details of this derivation, let us briefly review the combinatorial guarantees of APPR, with respect to graph clustering. This is indeed important in light of our new variational formulation and the proposed algorithm for solving it. In particular, we will show that the optimality condition corresponding to this variational formulation, in fact, implies the special termination criterion of APPR, and hence, the proposed algorithm, upon termination, recovers a cluster with the same combinatorial guarantees as the solution of APPR.

%\subsection{Combinatorial Guarantees of APPR}
%\label{sec:graphguarantees}
Conductance is a widely used concept in graph clustering to measure the quality of a cluster. Loosely speaking, conductance of a cluster is defined as the ratio of its external over internal connectivities. Lower conductance translates to a better cluster since it implies the cluster is better connected internally than externally. More specifically, let $w_{ij}$ be the weight of the edge between two neighbor nodes $i\sim j$.\
We define the conductance of a subset of nodes $S\subset \V$ as
$$
\Phi(S) := \frac{\displaystyle\sum_{i \in S} \displaystyle\sum_{j\in \V\backslash S,j\sim i} w_{ij}}{\min \left(\mbox{vol}(S) , \mbox{vol}(\V\backslash S) \right)}
$$
and the minimum-conductance of a given graph $\G$ as 
\begin{equation}\label{eq:phig}
\Phi (\G):= \min_{S\subset \mathcal{V}} \Phi(S).
\end{equation}

%In what follows, we discuss the performance of APPR for local graph clustering using conductance. 
Given a target cluster $C$ with conductance $\Phi(C) \le \Omega(\varphi^2/\log m)$ and $\alpha$ set properly according to $\varphi$, a particular rounding algorithm is applied to the output of APPR which determines a set of nodes in the graph with conductance of at most $\varphi$. More precisely, let $\pp_k$ be the output of APPR with input value $\alpha$ and let $\rr_k$ be the residual of \eqref{eq:ppr}.\
According to~\cite[Theorem 5]{ACL06}, the output of APPR can be used as an input to a rounding procedure (see~\cite[Section 2.2 ]{ACL06})
to produce clusters of low-conductance.\
The rounding procedure sorts the indices in $\text{supp}(\pp_k)$ in decreasing order according to the values of the components of $D^{-1}\pp_k$. Let $i_1,i_2,\dots,i_{|H_k|}$ be the sorted indices, where $H_k = \text{supp}(\pp_k)$.\ Using the sorted indices, the rounding procedure generates a collection of sets $\mathcal{S}_{j}:=\{i_1,i_2,\dots,i_j\}$ for each $j\in \{1,2,\dots,|H_k|\}$.\
Provided that there exists a subset of nodes, $C$, such that $\Phi(C) \le \alpha/10$, $\text{vol}(C) \le 2\text{vol}(\G)/3$, $\bss$ is initialized within nodes in $C_\alpha$, where $C_\alpha \subseteq C$ satisfies $\text{vol}(C_\alpha) \ge \text{vol}(C)/2$,
and $\rho = {1}/{(10 \text{vol}(C))}$ 
then~\cite[Theorem 5]{ACL06} implies that $$\min\limits_{j\in \{1,2,\dots,|H_k|\}}\Phi(\mathcal{S}_j)\leq \sqrt{135 \log(m) \alpha}.$$ 
This result is a local analogue of the Cheeger inequality \cite{Cheeger69_bound} for PageRank vectors.
%Theorem $5$ in \cite{ACL06} suggests setting $\alpha = 10 \Phi(\G)$ which gives us
%$\min\limits_{j\in \{1,2,\dots,|H_k|\}} \Phi(\mathcal{S}_j)  \le \sqrt{1350 \Phi(\G) \log(m)}$.
%Similar guarantees can be proved if we replace APPR with Algorithm \ref{acl:l1pg}.

An undesirable side-effect of this rounding procedure is the lack of a lower bound on the volume of the 
output cluster. This, in particular, implies that it is possible to find a very small cluster. 
As a remedy,~\cite[Section 6]{ACL06} introduces PageRank-Nibble procedure.\ 
%In the remaining section we describe the main result about PageRank-Nibble from Section $6$ in \cite{ACL06}, and we refer the reader to \cite{ACL06} for a more detailed discussion of the algorithm.
Let $\phi\in[0,1]$ be a parameter and assume that there exists $C\subset \V$ such that $\text{vol}(C) \le \text{vol}(\G)/2$ and $\Phi(C) \le \phi^2/(22500\log^2 (100m))$. PageRank-Nibble makes only a single call to APPR and uses its output to produce the rounded sets as before. However~\cite[Theorem 7]{ACL06} suggests that if APPR is initialized with $\alpha = \phi^2/(225\log (100 m^{1/2}))$ and $\bss$ is set in $C_\alpha$, then there exists some $b\in [1, \ceil{\log m}]$ such that if $\rho \le (2^b 48 \ceil{\log m})^{-1}$, at least one set $\mathcal{S}_{j}$ satisfies 
$\Phi(\mathcal{S}_{j}) \le \phi$, $2^{b-1} < \mbox{vol}(\mathcal{S}_{j}) < 2\mbox{vol}(\G)/3$ and $\mbox{vol}(\mathcal{S}_{j}\cap C) > 2^{b-2}$.\
%PageRank-Nibble requires a single call to Algorithm \ref{acl:cd2}.\ 

%\begin{theorem} \label{thm:pageranknibble} 
%Let $\phi\in[0,1]$, $\alpha = \phi^2/(225\log (100 m^{1/2}))$ and $\bss$ is set within a subset of nodes $C_\alpha\subseteq C$, 
%where $C$ satisfies $\mbox{vol}(C) \le \mbox{vol}(\G)/2$ and $\Phi(C) \le \phi^2/(22500\log^2 (100m))$.\
%There exists $b\in [1, \ceil{\log m}]$ such that if $\rho \le (2^b 48 \ceil{\log m})^{-1}$ then 
%Algorithm \ref{acl:l1pg} with $\rho'={\rho}/{(1+\epsilon)}$\
%can be used in PageRank-Nibble to find a set $S$ that satisfies $\Phi(S) \le \phi$, $2^{b-1} < \mbox{vol}(S) < 2\mbox{vol}(\G)/3$ and $\mbox{vol}(S\cap C) > 2^{b-2}$.
%\end{theorem}

\section{Variational Formulation}
\label{subsec:modacl}
In this section we set out to derive the variational formulation characterizing APPR and discuss how we can view the approximate solution of~\eqref{eq:ppr} as the optimal solution of an $\ell_1$-regularized problem.
%In Section~\ref{sec:background}, we showed that Algorithm \ref{acl:cd_ppr} is indeed equivalent to the coordinate descent interpretation of APPR Algorithm \ref{acl:cd2}. 

A key observation which helps us derive the sought-after variational formulation is given by the following lemma. In particular, Lemma~\ref{lemma:aclpossol} shows that the iterates generated by Algorithm \ref{acl:cd_ppr} with a particular initialization, have an interesting property, in that they all satisfy $\nabla f(\qq_k) \le 0$ $\forall k$. 

\begin{lemma}
\label{lemma:aclpossol}
If Algorithm \ref{acl:cd_ppr} is initialized with $\qq_0=0$ and $\bss\ge0$, then $\qq_{k+1}\ge \qq_k$ and $\nabla f(\qq_k) \le 0$ $\forall k$.
\end{lemma}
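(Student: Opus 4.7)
The plan is to prove both statements simultaneously by induction on $k$, since they reinforce each other: non-positivity of the gradient is what makes the chosen coordinate update non-negative, and the structure of the gradient-update rule preserves non-positivity precisely because the current gradient is non-positive.

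For the base case $k=0$, since $\qq_0 = \zero$, formula \eqref{eq:f} gives $\nabla f(\qq_0) = -\alpha D^{-1/2}\bss$. Because $\bss \geq \zero$ and $D^{-1/2}$ is a positive diagonal matrix, it follows immediately that $\nabla f(\qq_0) \leq \zero$.

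For the inductive step, assume $\nabla f(\qq_k) \leq \zero$. The algorithm selects some index $i$ with $\nabla_i f(\qq_k) < -\alpha\rho d_i^{1/2} < 0$, and sets $\qq_{k+1}(i) = \qq_k(i) - \nabla_i f(\qq_k)$, while leaving all other coordinates untouched. Since $-\nabla_i f(\qq_k) > 0$ and the remaining coordinates are unchanged, $\qq_{k+1} \geq \qq_k$ coordinate-wise. To check $\nabla f(\qq_{k+1})\leq \zero$, I would verify each of the three cases in Steps 5--7 separately. For the pivot coordinate $i$, the update gives $\nabla_i f(\qq_{k+1}) = \tfrac{1-\alpha}{2}\nabla_i f(\qq_k)$, which is non-positive because $\alpha \in (0,1)$ and $\nabla_i f(\qq_k) < 0$. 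For any neighbor $j \sim i$, the update is $\nabla_j f(\qq_{k+1}) = \nabla_j f(\qq_k) + \tfrac{1-\alpha}{2 d_i^{1/2}d_j^{1/2}}A_{ij}\nabla_i f(\qq_k)$; the second summand is a product of a non-negative scalar with the non-positive quantity $\nabla_i f(\qq_k)$, hence non-positive, and added to the non-positive $\nabla_j f(\qq_k)$ stays non-positive. For coordinates $j \nsim i$, the gradient entry is unchanged and remains $\leq 0$ by the inductive hypothesis.

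I do not anticipate a serious obstacle here, as the argument is a clean sign-tracking induction. The only subtlety worth being careful about is that the recursive formulas in Steps 5--7 correctly represent the true updated gradient $\nabla f(\qq_{k+1})$; this is already established in Section~\ref{sec:background} via the identification $D^{1/2}\nabla f(\qq) = \rr$ together with the residual recursion \eqref{eq:ppr_residual}, so it can be invoked without re-derivation. The non-negativity of the entries of $A$ (since $\G$ is an unweighted undirected graph, or has non-negative weights), which is crucial in the neighbor case, follows from the standing assumptions on $\G$.
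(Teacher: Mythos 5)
Your proof is correct and follows essentially the same route as the paper's: an induction on $k$ in which non-positivity of $\nabla f(\qq_k)$ forces the coordinate update to be an increase, and the sign structure of the updates in Steps 5--7 (pivot scaled by $\tfrac{1-\alpha}{2}$, neighbors receiving a non-positive increment via $A_{ij}\geq 0$, others unchanged) propagates $\nabla f(\qq_{k+1})\leq \zero$. The only cosmetic difference is that the paper also carries $\qq_k\geq 0$ explicitly in the induction hypothesis, which in your version follows anyway from $\qq_0=\zero$ and the monotonicity $\qq_{k+1}\geq\qq_k$.
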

\begin{proof}
We will prove this statement by induction. Let us assume that at the $k^{th}$ iteration we have $\qq_k\ge 0$ and $\nabla f(\qq_k)\le 0$.
Further, let assume that there exists coordinate $i$ such that $\nabla_i f(\qq_k) < -\rho \alpha d^{1/2}_i$, otherwise, the termination criterion 
is satisfied. Algorithm \ref{acl:cd_ppr} chooses one coordinate which satisfies $\nabla_i f(\qq_k) < -\rho \alpha d^{1/2}_i$. Then from Step $4$ of Algorithm
\ref{acl:cd_ppr} we have that $\qq_{k+1} \ge \qq_k$. Moreover, from Steps $5$, $6$, and $7$, it follows that $\nabla_i f(\qq_k) < \nabla_i f(\qq_{k+1}) < 0$, 
$\nabla_j f(\qq_{k+1}) < \nabla_j f(\qq_k) \le 0$ for each $j$ such that $i\sim j$ and $\nabla_j f(\qq_{k+1}) = \nabla_j f(\qq_k) \le 0$ 
for each $j$ such that $i\nsim j$. Hence, $\nabla f(\qq_{k+1}) \le 0$. Let $\qq_0=0$ and $\bss\ge 0$.
Then $\nabla f(\qq_0) = -\alpha \bss \le 0$. We conclude that $\qq_{k+1} \ge \qq_k\ge 0$ and $\nabla f(\qq_k) \le 0$ $\forall k$.
\end{proof}

On the one hand, as argued in Section~\ref{sec:background}, Algorithm \ref{acl:cd_ppr} is equivalent to the coordinate descent interpretation of APPR. On the other, Algorithm~\ref{acl:cd_ppr} terminates when 
\begin{equation}\label{eq:termcritacl}
\|D^{-1/2}\nabla f(\qq_k)\|_\infty \le \rho \alpha,
\end{equation}
which, since by Lemma \ref{lemma:aclpossol} the gradient components at every iteration are all non-positive, is equivalent to
\begin{equation}\label{acltcond}
\nabla_i f(\qq_k) \ge -\rho \alpha d^{1/2}_i\ \forall i.
\end{equation}
Interestingly, the termination criterion \eqref{acltcond} is related to the first-order optimality conditions of the following $\ell_1$-regularized problem
\begin{equation}\label{l1pg}
\mbox{$\ell_1$-reg. PR: } \boxed{\mbox{minimize} \quad \psi(\qq):= \rho \alpha \|D^{1/2}\qq\|_1 + f(\qq).}
\end{equation}
%Problem \eqref{l1pg} has a unique optimal solution because the quadratic matrix $Q$ in $f$ is symmetric positive definite. 
Let $\qq_{*}$ denote the optimal solution of~\eqref{l1pg}.
The first-order optimality conditions of \eqref{l1pg} can be written as
\begin{equation}
\nabla_i f(\qq_*) = \begin{cases}
			  -\rho\alpha d^{1/2}_i & \mbox{if } \qq_*(i) > 0 \\
			  \ \ \rho\alpha d^{1/2}_i & \mbox{if } \qq_*(i) < 0 \\
			  \in \rho\alpha d^{1/2}_i [-1,1] & \mbox{if } \qq_*(i) = 0.
			  \end{cases} 
\end{equation}
Theorem \ref{thm:reducedacl}, below, shows that the solution of~\eqref{l1pg} has the property that $\qq_*\ge 0$. Therefore, 
the optimality conditions of problem \eqref{l1pg} are equivalent to 
\begin{equation}\label{optcondl1pg}
\nabla_i f(\qq_*) = \begin{cases}
			  -\rho\alpha d^{1/2}_i & \mbox{if } \qq_*(i) > 0 \\
			  \in \rho\alpha d^{1/2}_i [-1,0] & \mbox{if } \qq_*(i) = 0.
			  \end{cases} 
\end{equation}

%We make a few remarks regarding the formulation~\eqref{l1pg} and its termination criteria~\eqref{optcondl1pg}.
The formulation~\eqref{l1pg} is indeed a variational characterization of the APPR procedure as described by its coordinate descent representation in Algorithm~\ref{acl:cd_ppr}. However, notice that the optimality conditions \eqref{optcondl1pg} imply the termination criterion \eqref{acltcond} of APPR, but the converse is not necessarily true. 
This is because~\eqref{acltcond} does not distinguish between positive and zero components of $\qq_*$. 
Moreover, depending on which coordinate is chosen at every iteration, APPR can yield a different output on multiple runs. In other words, the output solution depends completely on the setting of the algorithm. In contrast, $\ell_1$-regularized PR formulation~\eqref{l1pg} decouples the locality/sparsity  of the solution from properties of the algorithm, i.e., which nodes are chosen at every iteration. More specifically, if there exists a good local cluster, then \textit{any optimization algorithm applied to $\ell_1$-regularized PR obtains the same solution, and the differences merely boil down to running time and locality as opposed to the actual output solution}. \textcolor{black}{Note that in practice algorithms solve approximately the $\ell_1$-regularized PR, therefore, small differences might exist among solutions of different algorithms. However, the longer that any convergent algorithm is run the closer 
its solution will be to the optimal solution of the $\ell_1$-regularized PR problem.}

The proposed optimization formulation \eqref{l1pg} is motivated by~\cite[Theorem 3]{GM14_ICML}.\ However, by drawing a clear connection between the termination criterion of APPR,~\eqref{acltcond}, and the first-order optimality conditions of $\ell_1$-regularized PR,~\eqref{optcondl1pg}, we get a much simpler formulation than the one presented in \cite{GM14_ICML}.
In particular, unlike the formulation of \cite{GM14_ICML}, problem \eqref{l1pg} does not require any additional tuning parameters other than the ones used for APPR, nor does it introduce any constraints, such as non-negativity. 
More importantly, the formulation in \cite{GM14_ICML} only implies the sparsity of the final solution as opposed to the intermediate iterates produced by any iterative procedure applied to solve the corresponding optimization problem. In sharp contrast, in Section~\ref{sec:ista_alg}, we will show that the application of properly initialized ISTA to our formulation~\eqref{l1pg} maintains sparsity for all generated iterates, a property which is crucial to obtaining a local algorithm.
%Our clean l1-reg. problem is the reason that we can leverage a new class of 
%optimization algorithms.
%In addition we show in Section \ref{section:cutgu} that by solving problem \eqref{l1pg}, one can theoretically achieve the same worst-case local graph clustering guarantees as APPR.

\section{Algorithm}
\label{sec:ista_alg}
As mentioned before, an advantage of the variational formulation~\eqref{l1pg} is that it decouples the properties of the obtained solution from the applied algorithm. This allows for application of any optimization algorithm. However, among all options, we need to find methods that, like APPR, enjoy \emph{locality} properties, in that they only require access to small portion of the graph. In doing so, in this section, we investigate the application of ISTA for solving~\eqref{l1pg} and study its theoretical properties such as locality and running time. The adaptation of ISTA to our particular problem is depicted in Algorithm~\ref{acl:l1pg}. 

\textcolor{black}{The main computational advantage of APPR is that, APPR never requires access to the entire graph and iterations are performed efficiently which makes the application of APPR very appealing for modern large graphs.\ }
Interestingly, we  now show that Algorithm~\ref{acl:l1pg}, which incorporates a presumably global optimization routine such as ISTA, exhibits this desired locality property while inheriting the fast convergence properties of ISTA.

\begin{algorithm}[htb]
\caption{ISTA-equivalent solver for \eqref{l1pg}}
\label{acl:l1pg}
\begin{algorithmic}[1]
\STATE \textbf{Initialize}: $\epsilon\in(0,1)$,  $\alpha>0$, $\qq_0 = 0$, $\rho > 0$, $\bss$ such that $\lin{\ee,\bss} = 1$ and $\bss\ge \bold 0$, set $\nabla f(\qq_0) = - \alpha D^{-1/2}\bss$. \vspace{0.1cm}
%Moreover, choose $\rho$ such that $\|D^{-1/2}\nabla f(q_0)\|_\infty > \rho \alpha$.\vspace{0.1cm}
\WHILE {$\|D^{-1/2}\nabla f(\qq_k)\|_\infty > (1+\epsilon)\rho \alpha $} \vspace{0.1cm}
\STATE Set $S_k:=\{i\in [n] \ | \ \qq_k(i) - \nabla_i f(\qq_k) \ge \rho \alpha d_i^{1/2}\}$\vspace{0.1cm}
%\STATE Choose $S_k^\tau \subseteq S_k$ such that $|S_k^\tau| = \min (\tau, |S_k|)$
\STATE $\Delta \qq_k := - (\nabla_{S_k} f(\qq_k) + \rho \alpha \dd^{1/2}_{S_k})$ and $\qq_{k+1}(S_k) = \qq_k(S_k) + \Delta \qq_k$
\STATE For each $i\in S_k$ set
			\begin{align*}
	    \nabla_{i} f(\qq_{k+1})  = -\rho \alpha d_{i}^{1/2}- \frac{1-\alpha}{2} [I_{S_{k}} \Delta \qq_{k}]_{i} -\frac{1-\alpha}{2d_i^{1/2}}\displaystyle\sum_{l	\sim i, l\in S_k} \frac{A_{il}[I_{S_k}\Delta \qq_k]_l}{d_l^{1/2}}
	    \end{align*}
	    %\begin{align*}
	    %\nabla_{i} f(q_{k+1})  = \nabla_i f(q_k) +  \frac{1+\alpha}{2} [I_{S} \Delta q_{k}]_i - \frac{1-\alpha}{2d_i^{1/2}}\displaystyle\sum_{l\sim i, l\in S} \frac{A_{i,l}[I_{S}\Delta q_k]_l}{d_l^{1/2}})
	    %\end{align*}
\STATE For each $j\notin S_k$ such that $j\sim S_k$ set
\begin{align*}
\nabla_j f(\qq_{k+1})  = \nabla_j f(\qq_k) - \frac{1-\alpha}{2d_j^{1/2}}\displaystyle\sum_{l\sim j, l\in S_k} \frac{A_{jl}[I_{S_k}\Delta \qq_k]_l}{d_l^{1/2}}
\end{align*}
\STATE For each $j\notin S_k$ such that $j\nsim S_k$ set
$$
\nabla_j f(\qq_{k+1}) = \nabla_j f(\qq_k)
$$
\STATE $k = k + 1$
\ENDWHILE
\STATE \textbf{return} $\pp_k:= D^{1/2}\qq_k$
\end{algorithmic}
\end{algorithm}

%First let us explain why Algorithm~\ref{acl:l1pg} has running time that depends only on the number of nonzeros of $\qq^*$.
Theorem~\ref{thm:reducedacl} shows the equivalence between Algorithm~\ref{acl:l1pg} and ISTA, and more importantly, establishes the desired locality property. In particular, part~\ref{thm:ii} of Theorem~\ref{thm:reducedacl} states that if Algorithm~\ref{acl:l1pg} is initialized properly, then despite the fact that the set $S_{k}$ changes at every iteration (Step 3 of Algorithm~\ref{acl:l1pg}), its size, $|S_{k}|$, indeed \emph{never grows larger} than the total number of non-zeros of the optimal solution. As such, in the worst case where one might update all the coordinates in $S_{k}$ at every iteration, the per-iteration cost depends only on the sparsity of the final solution vector, as opposed to the size of the full graph.
%This is proved in Theorem~\ref{thm:reducedacl} by initializing Algorithm~\ref{acl:l1pg} with the zero vector. 
%Furthermore, we show in Theorem~\ref{thm:reducedacl} that each iteration of Algorithm~\ref{acl:l1pg} is indeed equivalent to each iteration ISTA.

\begin{theorem}
\label{thm:reducedacl}
Let $\qq_*$ be the optimal solution of \eqref{l1pg} and consider $\rho > 0$ and a vector $\bss \geq 0 $ such that $\lin{\ee, \bss} = 1$ and $\|\bss\|_\infty \ge \rho$. Algorithm \ref{acl:l1pg} has the following properties.
\begin{enumerate}[label = (\roman*)]
\item \label{thm:i} Algorithm \ref{acl:l1pg} is equivalent to ISTA in \cite{bt09},
\item \label{thm:ii_2} \textcolor{black}{$S_k \subseteq S_{k+1} \subseteq \mbox{supp}(\qq_*)$ $\forall k$},
\item \label{thm:ii} $|S_k| \le |S_{k+1}| \le |\mbox{supp}(\qq_*)|$, \; $\forall k$,
\item \label{thm:iii} $0 \leq \qq_k \leq \qq_{k+1}$, \; $\forall k$, \textcolor{black}{which implies that $\qq_* \geq 0$, since $\qq_k\to \qq_* $ as $k\to\infty$.}
\item \label{thm:iv} $\nabla f (\qq_k) \le 0$, and moreover $\nabla_i f(\qq_k) \le -\rho \alpha d_i^{1/2}$ $\forall i\in S_k$ and $\nabla_i f(\qq_k) > -\rho \alpha d_i^{1/2}$ $\forall i\in [n]\backslash S_k$ $\forall k$.
\end{enumerate}
\end{theorem}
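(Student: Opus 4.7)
The plan is to verify part~\ref{thm:i} by direct computation. Since $\nabla f$ is $1$-Lipschitz (established earlier) and the regularizer $\rho\alpha\|D^{1/2}\qq\|_1$ is separable with diagonal weights $\rho\alpha d_i^{1/2}$, a single ISTA step with unit step size reduces to coordinate-wise soft-thresholding of $\qq_k - \nabla f(\qq_k)$. The set $S_k$ in Step~3 is exactly the coordinates where the positive branch of the soft-threshold operator is active, and on $S_k$ the formula $\Delta\qq_k = -(\nabla_{S_k}f(\qq_k) + \rho\alpha \dd_{S_k}^{1/2})$ is precisely what soft-thresholding returns. The explicit gradient-update formulas in Steps~5--7 then follow from expanding $\nabla f(\qq_{k+1}) = \nabla f(\qq_k) + Q\, I_{S_k}\Delta\qq_k$, using $Q = \frac{1+\alpha}{2}I - \frac{1-\alpha}{2}D^{-1/2}AD^{-1/2}$ and the sparsity of $I_{S_k}\Delta\qq_k$.

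The bulk of the work is a single joint induction on $k$ that simultaneously establishes five claims: (a)~$0\le\qq_k$; (b)~$\nabla f(\qq_k)\le 0$; (c)~$\nabla_i f(\qq_k)\le -\rho\alpha d_i^{1/2}$ for every $i\in S_{k-1}$; (d)~$S_{k-1}\subseteq S_k$; and (e)~$\text{supp}(\qq_k)\subseteq S_{k-1}$. The base case $k=0$ is immediate from $\qq_0=0$ and $\nabla f(\qq_0)=-\alpha D^{-1/2}\bss \le 0$. In the inductive step, for any $i\in S_k$ either $i\in S_{k-1}$ and (c)~applies directly, or $i\notin S_{k-1}$, in which case (e)~forces $\qq_k(i)=0$ and the defining inequality $\qq_k(i)-\nabla_i f(\qq_k)\ge\rho\alpha d_i^{1/2}$ again yields $\nabla_i f(\qq_k)\le -\rho\alpha d_i^{1/2}$. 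Hence $\Delta\qq_k\ge 0$ on $S_k$, which is part~\ref{thm:iii} and extends (a). Feeding this into Step~5 gives $\nabla_i f(\qq_{k+1})\le -\rho\alpha d_i^{1/2}$ on $S_k$, which is (c)~at $k+1$ and, via $\qq_{k+1}(i)-\nabla_i f(\qq_{k+1})\ge \rho\alpha d_i^{1/2}$, propagates $S_k\subseteq S_{k+1}$. Steps~6--7 show that gradients outside $S_k$ either decrease or stay fixed, preserving (b)~and yielding the strict complementary inequality in part~\ref{thm:iv}. Finally, $\text{supp}(\qq_{k+1})\subseteq S_k$ follows by splitting on whether $\qq_k(i)>0$ (in which case $i\in S_{k-1}\subseteq S_k$) or $\Delta\qq_k^i>0$ (in which case $i\in S_k$), closing the induction. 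Part~\ref{thm:ii} is the cardinality form of the set inclusions just established.

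It remains to prove $S_k\subseteq\text{supp}(\qq_*)$ and the non-negativity of $\qq_*$ asserted in part~\ref{thm:iii}. Since $\{\qq_k\}$ is monotone non-decreasing and ISTA is convergent ($\psi$ being coercive and strictly convex), $\qq_k \uparrow \qq_*$ and hence $\qq_*\ge 0$. For any $i\in S_k$, claim (c) extended to all $\ell\ge k$ gives $\nabla_i f(\qq_\ell)\le -\rho\alpha d_i^{1/2}$, and passing to the limit yields $\nabla_i f(\qq_*)\le -\rho\alpha d_i^{1/2}$; together with the first-order optimality conditions~\eqref{optcondl1pg} this forces $\qq_*(i)>0$, i.e., $i\in\text{supp}(\qq_*)$. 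The main obstacle I foresee is the book-keeping of the joint induction, particularly reconciling the algorithmic definition of $S_k$ via $\qq_k(i)-\nabla_i f(\qq_k)$ with the gradient-only characterization $\nabla_i f(\qq_k)\le -\rho\alpha d_i^{1/2}$ needed to drive the update formula; the inclusion (e)~is precisely the bridge that makes these two descriptions agree on $S_k$ at every iteration. The final step also leans on the hypothesis $\|\bss\|_\infty\ge\rho$ to rule out degenerate boundary cases in which $\nabla_i f(\qq_*) = -\rho\alpha d_i^{1/2}$ coexists with $\qq_*(i)=0$.
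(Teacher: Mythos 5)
Your proposal is correct and follows essentially the same route as the paper's proof: identify the unit-step ISTA update as coordinate-wise soft-thresholding, then run a joint induction establishing $\qq_k\ge 0$, $\nabla f(\qq_k)\le 0$, $\nabla_i f(\qq_k)\le-\rho\alpha d_i^{1/2}$ on $S_k$, and $S_k\subseteq S_{k+1}$ (the paper phrases the equivalence as showing the negative-threshold set $\widetilde{S}_k$ stays empty, and your invariant $\mbox{supp}(\qq_k)\subseteq S_{k-1}$ plays the same bridging role as the paper's observation that $\qq_{k+1}(i)=0$ for newly entering coordinates). The only divergence is your derivation of $S_k\subseteq\mbox{supp}(\qq_*)$ by passing the gradient inequality to the limit and invoking \eqref{optcondl1pg}, versus the paper's monotone-positivity argument; both versions leave the same degenerate boundary case ($\nabla_i f(\qq_*)=-\rho\alpha d_i^{1/2}$ with $\qq_*(i)=0$) treated only informally, and your appeal to $\|\bss\|_\infty\ge\rho$ to dismiss it is no more rigorous than the paper's own handling.
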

\begin{proof}
Define 
\begin{align*}
\tilde{f}(\qq; \qq_{k}) &:=  f(\qq_{k}) + \lin{\qq-\qq_{k}, \nabla f(\qq_{k})} + \frac{1}{2} \|\qq - \qq_{k}\|_2^2, \\
\tilde{\psi}(\qq; \qq_{k}) &:= \rho \alpha \|D^{1/2} \qq \|_1 + \tilde{f}(\qq; \qq_{k}).
\end{align*}
It is easy to see that
$$
\arg \min_{\qq} \tilde{\psi}(\qq; \qq_{k}) = \arg \min_{\qq} \rho \alpha \|D^{1/2} \qq \|_1 + \frac{1}{2} \| \qq - (\qq_{k} - \nabla f(\qq_{k}) )\|_{2}^{2},
$$
and hence 
$$
\qq(i) = \text{\textbf{prox}}_{\rho \alpha d_{i}^{1/2} \|.\|_{1}} \left(\qq_{k}(i) - \nabla_{i} f(\qq_{k})\right),
$$
where \text{\textbf{prox}} is the proximal operator~\cite{parikh2013proximal}.\ Now let us define the sets 
\begin{align}\label{eq:sets}\nonumber
S_k   & := \{i\in [n] \ | \ \qq_k(i) - \nabla_i f(\qq_k) \ge \rho \alpha d_i^{1/2}\},\\
\widehat{S}_k & := \{i\in [n] \ | \ -\rho \alpha d_i^{1/2} < \qq_k(i) -  \nabla_i f(\qq_k) <  \rho \alpha d_i^{1/2}\},\\\nonumber
\widetilde{S}_k & := \{i\in [n] \ | \ \qq_k(i) - \nabla_i f(\qq_k) \leq  - \rho \alpha d_i^{1/2}\}.
\end{align}
For convenience, below, we rewrite ISTA from~\cite{bt09}. 
\begin{algorithm}[htb]
\caption{ISTA for \eqref{l1pg}}\label{acl:ista}
\begin{algorithmic}[1]
\STATE \textbf{Initialize}: $\rho>0$, $\qq_0 = 0$, thus $\nabla f(\qq_0) = - \alpha D^{-1/2}\bss$ 
\WHILE {termination criteria are not satisfied} \vspace{0.1cm}
\STATE  $\qq_{k+1}(i) = \text{\textbf{prox}}_{\rho \alpha d_{i}^{1/2} \|.\|_{1}} \left(\qq_{k}(i) - \nabla_{i} f(\qq_{k})\right), \; \forall i$, whose closed-form solution is given by
\begin{equation*}
\qq_{k+1}(i) = \begin{cases}
			  \qq_k(i)  - (\nabla_i f(\qq_k) + \rho \alpha d_i^{1/2}) & \mbox{if } i\in S_k \\
			  \qq_k(i)  - (\nabla_i f(\qq_k) - \rho \alpha d_i^{1/2})  & \mbox{if } i\in \widetilde{S}_k \\
			  0 & \mbox{if } i\in \widehat{S}_k.
		  \end{cases} 
\end{equation*} 
\STATE Calculate new gradient $\nabla f(\qq_{k+1})$.
\STATE $k = k + 1$
\ENDWHILE
\STATE \textbf{return} $\pp_k:= D^{1/2}\qq_k$
\end{algorithmic}
\end{algorithm}
To show that Algorithms~\ref{acl:l1pg} and~\ref{acl:ista} are equivalent, it suffices to show that $\widetilde{S}_{k} = \emptyset, \forall k$. We will prove the result by induction. Let us assume that at iteration $k$ we have a $\qq_k\ge 0$, $\nabla f(\qq_k) \le 0$ and $\nabla_i f(\qq_k) \le -\rho \alpha d_i^{1/2}$ $\forall i\in S_k$. As a result of the first two assumptions, we have \textcolor{black}{$\widetilde{S}_k=\emptyset$ and $S_{k} \cup \widehat{S}_k = [n]$}. Hence, Step $3$ of ISTA Algorithm~\ref{acl:ista} can be simplified as 
\begin{equation}\label{ista-step-red}
\qq_{k+1}(i) = 
		\begin{cases}
			  \qq_k(i) - (\nabla_i f(\qq_k) + \rho \alpha d_i^{1/2} ) & \mbox{if } i\in S_k \\
			  0 & \mbox{if } i\in \widehat{S}_{k}
		  \end{cases}.
\end{equation} 
Define $\Delta \qq_{k} := - I_{S_{k}}^{T}\big(\nabla f(\qq_k) + \rho \alpha D^{1/2} \ee \big)$, where $I_{S_k}$ is defined in Section \ref{section:preliminaries}.
Consequently, at iteration $k$, the new gradient components are updated as follows
\begin{equation}\label{ista-step-grad}
\nabla_{i} f(\qq_{k+1}) = 
		\begin{cases}
		-\rho\alpha d_i^{1/2}  - \frac{1-\alpha}{2}[I_{S_k}\Delta \qq_k]_i - \frac{1-\alpha}{2 d_i^{1/2}}\displaystyle\sum_{l\sim i, l\in S_k} \frac{A_{il}[I_{S_k}\Delta \qq_k]_l}{d_l^{1/2}}, \quad i \in S_{k} \\
			  %\nabla_i f(\qq_k) +  \frac{1+\alpha}{2} [I_{S_k}\Delta \qq_{k}]_{i} - \frac{1-\alpha}{2 d_i^{1/2}}\displaystyle\sum_{l\sim i, l\in S_k} \frac{A_{i,l}[I_{S_k}\Delta \qq_k]_l}{d_l^{1/2}}, \quad i \in S_{k} \\
			  \nabla_i f(\qq_k) - \frac{1-\alpha}{2d_i^{1/2}}\displaystyle\sum_{l\sim i, l\in S_k} \frac{A_{il}[I_{S_k}\Delta \qq_k]_l}{d_l^{1/2}}, \quad \quad \quad \; \; \; \;i\in \widehat{S}_k \text{ and } i \sim S_k\\
				\nabla_i f(\qq_k), \quad \quad \quad \quad \quad  \quad \quad \; \; \quad \; \quad \quad \quad \quad \quad \quad \; i \in \widehat{S}_k \text{ and } i \nsim S_k,
		  \end{cases}
\end{equation} 
where $A$ is the adjacency matrix of the given graph. \textcolor{black}{Equation \eqref{ista-step-grad} is obtained by using 
$\nabla f(\qq_{k+1}) = \nabla f(\qq_{k}) - I_{S_k}\Delta \qq_k - \frac{1-\alpha}{2} I_{S_k}\Delta \qq_k - \frac{1-\alpha}{2}D^{-1/2}AD^{-1/2}I_{S_k}\Delta \qq_k$ and the definition of $\Delta \qq_k$}. By induction hypothesis and noticing that $\Delta \qq_{k} \geq 0$ and $A_{i,l} \geq 0, \forall i,l$, it is easy to see that by~\eqref{ista-step-red}, we have $\qq_{k+1} \geq 0$, and by~\eqref{ista-step-grad}, we get $\nabla f(\qq_{k+1}) \le 0$. Hence, it follows that $\widetilde{S}_{k+1} = \emptyset$. In addition, for any $i \in S_{k}$, we get $\nabla_{i} f(\qq_{k+1}) \leq -\rho \alpha d^{1/2}_{i}$ and, as such, $i \in S_{k+1}$. In other words, once an index $i$ enters the set $S_{k}$ at iteration $k$, it will continue to stay in that set for all subsequent iterations, and so we always have $\qq_{k+1}(i) \geq \qq_{k}(i)$.\ 
\textcolor{black}{As a result we obtain $S_k \subseteq S_{k+1}$ and $|S_k| \le |S_{k+1}|$}.
\textcolor{black}{The only indices entering $S_{k+1}$ are those from $\widehat{S}_{k}$ that are also neighbors of $S_{k}$. To prove this use that $\widetilde{S}_k = \emptyset$ $\forall k$, therefore the only coordinates that can enter in $S_k$ come from $\widehat{S}_k$.  In addition from \eqref{ista-step-red} we have that $[\qq_k]_i = 0$ $\forall i \in \widehat{S}_k$ and from \eqref{ista-step-grad} we have that 
neighbors of $S_k$ that are also in $\widehat{S}_k$ get their partial derivatives updated. Therefore, using the definition of $S_k$ in \eqref{eq:sets} only the neighbors of $S_k$ that are also in $\widehat{S}_k$ might enter $S_k$, since the rest of the coordinates in $i\in \widehat{S}_k$ have $[\qq_k]_i=0$ and also do not get their partial derivatives updated}. In this case, suppose that $i \in \widehat{S}_{k} \cap S_{k+1}$. By~\eqref{ista-step-red}, we have $\qq_{k+1}(i) = 0$, which combined with the definition of $S_{k+1}$, yields $\nabla_i f(\qq_{k+1}) \le -\rho \alpha d_i^{1/2}$. As a result, we have $\nabla_i f(\qq_{k+1}) \le -\rho \alpha d_i^{1/2}, \forall i\in S_{k+1}$. All is left to do is to start the iterations with the proper initial conditions, so that the base case of the induction holds. Set $\rho$ small enough that $\|\bss\|_\infty \ge \rho$. Now since $\bss\ge 0$, by choosing $\qq_0=0$, we have that $\nabla f(\qq_0) = -\alpha D^{-1/2} \bss \le 0$ and $\nabla_i f(\qq_0) \le -\rho \alpha d_i^{1/2}$ $\forall i \in S_0$. In addition, such a choice of $\qq_{0}$,~\eqref{ista-step-red} as well as the decreasing nature of $\widehat{S}_{k}$ imply that $\qq_{k+1} \geq \qq_{k}, \forall k$.  \textcolor{black}{Since $\qq_{k+1} \ge \qq_k$ $\forall k$ and $\qq_k \to \qq_*$ then Algorithm \ref{acl:l1pg} will update only coordinates that are in $\mbox{supp}(\qq_*)$.
To prove this note that if a coordinate in $\qq_k$ becomes positive it will remain positive because $\qq_{k+1} \ge \qq_k$. Since $\qq_k \to \qq_*$ it must be that only coordinates in $\mbox{supp}(\qq_*)$ 
will become positive in $\qq_k$ for some $k$. Thus, we have that $S_k \subseteq \mbox{supp}(\qq_*)$ and $|S_k|\le |\mbox{supp}(\qq_*)|$ $\forall k$.} 
\textcolor{black}{Finally, notice that $\nabla_i f(\qq_k) > -\rho \alpha d_i^{1/2}$ $\forall i\in [n]\backslash S_k$ $\forall k$. This can be proved by using $[n]\backslash S_k = \widehat{S}_k \cup \widetilde{S}_k$, $\widetilde{S}_k=\emptyset$, $\qq_k\ge 0$ $\forall k$
and using the definition of $\widetilde{S}_k$ in \eqref{eq:sets}.}
\end{proof}

Let 
\begin{equation}
\mathcal{S}_{*}:= \mbox{supp}(\qq_*),
\label{eq:opt_support}
\end{equation}
be the support of the optimal solution. In the following theorem, we give an upper bound for $\text{vol}(\mathcal{S}_*)$ which is, in turn, used in Theorem \ref{thm:aclanalysis}
to derive the worst-case running time of Algorithm \ref{acl:l1pg}.
\begin{theorem}\label{thm:volume}
We have that $\mbox{vol}(\mathcal{S}_*) \le \|\bss\|_1/\rho$, where $\rho$ is the regularization parameter of the $\ell_1$-regularized PageRank \eqref{l1pg}.
\end{theorem}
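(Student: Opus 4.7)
The plan is to use the first-order optimality conditions of the $\ell_1$-regularized PageRank problem \eqref{l1pg}, translate them into conditions on the residual vector $\rr_* = D^{1/2}\nabla f(\qq_*)$, and then exploit the column-stochasticity of the lazy random walk matrix $W$ to turn a summation identity into the desired bound.

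First I would invoke the optimality conditions \eqref{optcondl1pg}, which give $\nabla_i f(\qq_*) = -\rho \alpha d_i^{1/2}$ for every $i \in \mathcal{S}_*$ and $\nabla_i f(\qq_*) \in \rho \alpha d_i^{1/2}[-1,0]$ for every $i \notin \mathcal{S}_*$. Setting $\pp_* := D^{1/2}\qq_*$ and using the identity $D^{1/2}\nabla f(\qq_*) = \rr_* = (I - (1-\alpha)W)\pp_* - \alpha \bss$ from Section~\ref{sec:background}, these translate into $\rr_*(i) = -\rho \alpha d_i$ on $\mathcal{S}_*$ and $\rr_*(i) \in [-\rho \alpha d_i, 0]$ off $\mathcal{S}_*$. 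In particular $\rr_* \le \zero$ everywhere.

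Next I would exploit the fact that $AD^{-1}$ is column-stochastic (its $j^{th}$ column sums to $d_j/d_j = 1$), hence so is $W = (I+AD^{-1})/2$, which yields $\ee^T W = \ee^T$. Left-multiplying the residual identity by $\ee^T$ gives
\begin{equation*}
\ee^T \rr_* \;=\; (1 - (1-\alpha))\,\ee^T \pp_* - \alpha \,\ee^T \bss \;=\; \alpha\bigl(\|\pp_*\|_1 - \|\bss\|_1\bigr),
\end{equation*}
where I used $\pp_* \ge \zero$ (since $\qq_* \ge \zero$ by Theorem~\ref{thm:reducedacl}\ref{thm:iii}) and $\bss \ge \zero$.

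Finally I would split the sum over coordinates and plug in the optimality characterization:
\begin{equation*}
\alpha\bigl(\|\pp_*\|_1 - \|\bss\|_1\bigr) \;=\; \sum_{i\in\mathcal{S}_*} \rr_*(i) + \sum_{i\notin\mathcal{S}_*} \rr_*(i) \;=\; -\rho \alpha\, \mbox{vol}(\mathcal{S}_*) + \sum_{i\notin\mathcal{S}_*} \rr_*(i).
\end{equation*}
Since every off-support term is non-positive, dropping that tail yields $\alpha(\|\pp_*\|_1 - \|\bss\|_1) \le -\rho \alpha\, \mbox{vol}(\mathcal{S}_*)$, and rearranging together with $\|\pp_*\|_1 \ge 0$ gives $\mbox{vol}(\mathcal{S}_*) \le (\|\bss\|_1 - \|\pp_*\|_1)/\rho \le \|\bss\|_1/\rho$. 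The only subtle step is the column-stochasticity observation and ensuring the sign of the off-support residual tail works in our favor; the rest is bookkeeping with the KKT conditions.
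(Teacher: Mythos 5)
Your proof is correct, but it takes a genuinely different route from the paper's. You argue \emph{statically} at the optimum: you read off the optimality conditions \eqref{optcondl1pg}, pass to the residual $\rr_* = D^{1/2}\nabla f(\qq_*) = (I-(1-\alpha)W)\pp_* - \alpha\bss$, use column-stochasticity of $W$ to evaluate $\ee^T\rr_* = \alpha(\|\pp_*\|_1 - \|\bss\|_1)$, and then split the sum into support and off-support parts, where the non-positivity of the off-support residuals lets you drop the tail. The paper instead argues \emph{dynamically} along the iterates of Algorithm~\ref{acl:l1pg}: from Theorem~\ref{thm:reducedacl}\ref{thm:iv} it gets $\|D^{1/2}\nabla f(\qq_k)\|_1 \ge \rho\alpha\,\mbox{vol}(S_k)$ at every iteration, then shows that $\|D^{1/2}\nabla f(\qq_k)\|_1$ is non-increasing in $k$ (using the same conservation identity you use, in the equivalent form $(D-A)\ee = \zero$) and equals $\alpha\|\bss\|_1$ at $k=0$, and finally passes to the limit $S_k \to \mathcal{S}_*$. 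The key algebraic ingredient --- mass conservation under the lazy random walk --- is identical in both arguments, and both lean on Theorem~\ref{thm:reducedacl}: you need part~\ref{thm:iii} (so that $\qq_*\ge 0$, which justifies \eqref{optcondl1pg} and the identity $\ee^T\pp_* = \|\pp_*\|_1$), while the paper needs part~\ref{thm:iv} together with $\nabla f(\qq_k)\le 0$ and $\Delta\qq_k\ge 0$. What your version buys: it avoids the limiting argument over iterates and actually yields the marginally sharper bound $\mbox{vol}(\mathcal{S}_*) \le (\|\bss\|_1 - \|\pp_*\|_1)/\rho$. What the paper's version buys: it bounds $\mbox{vol}(S_k)$ for every finite $k$ along the way, not only in the limit.
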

\begin{proof}
From~\ref{thm:iv} in Theorem \ref{thm:reducedacl} we have that $\nabla_i f(\qq_k) \le -\rho \alpha d_i^{1/2}$ $\forall i\in S_k$ for any iteration $k$. Multiplying both sides of the latter by $-d_i^{1/2}$ and summing over all nodes in $S_k$ yields
$$
\sum_{i\in S_k} -d_i^{1/2}\nabla_i f(\qq_k) \ge \rho \alpha \mbox{vol}(S_k),
$$
which implies that
%From~\ref{thm:iv} in Theorem \ref{thm:reducedacl} we have that $\nabla_i f(\qq_k) \le 0$ $\forall i$, thus 
\begin{equation}\label{eq:vol_bd_k}
\|D^{1/2} \nabla f(\qq_k) \|_1 \ge \rho \alpha \mbox{vol}(S_k).
\end{equation}
We will now prove that $\|D^{1/2} \nabla f(\qq_k) \|_1$ decreases monotonically as $k$ increases.
From Step 4 of Algorithm~\ref{acl:l1pg}, we have $\qq_{k+1} = \qq_{k} + I_{S_{k}} \Delta \qq_{k}$. As a result, from~\eqref{eq:f}, it follows that
\begin{align*}
\nabla f(\qq_{k+1}) &= Q \qq_{k+1} - \alpha D^{-1/2} \bss \\
& = Q \qq_{k} + Q I_{S_{k}} \Delta \qq_{k} - \alpha D^{-1/2} \bss \\
&= \nabla f(\qq_{k}) + Q I_{S_{k}} \Delta \qq_{k} \\
&= \nabla f(\qq_{k}) + \left(\alpha I  + \frac{(1-\alpha)}{2} \left( I - D^{-1/2} A D^{-1/2} \right)\right) I_{S_{k}} \Delta \qq_{k}.
\end{align*}
\textcolor{black}{In the last inequality we used $Q = I - \frac{1-\alpha}{2}(I + D^{-1/2}AD^{-1/2}) = I + \frac{1-\alpha}{2}I - \frac{1-\alpha}{2}I - \frac{1-\alpha}{2}(I + D^{-1/2}AD^{-1/2}) = \alpha I  + \frac{(1-\alpha)}{2} \left( I - D^{-1/2} A D^{-1/2} \right)$.}
Hence, we get
\begin{align*}
D^{1/2} \nabla f(\qq_{k+1}) = D^{1/2} \nabla f(\qq_{k}) + \alpha D^{1/2}  I_{S_{k}} \Delta \qq_{k}  + \frac{(1-\alpha)}{2} ( D  - A ) D^{-1/2} I_{S_{k}} \Delta \qq_{k},
\end{align*}
which implies
\begin{align*}
\ee^{T} D^{1/2} \nabla f(\qq_{k+1}) &= \ee^{T} D^{1/2} \nabla f(\qq_{k}) + \alpha \ee^{T} D^{1/2}  I_{S_{k}} \Delta \qq_{k}  \\
& \quad \quad \quad \quad \quad \quad \quad \quad + \frac{(1-\alpha)}{2} \ee^{T} ( D  - A ) D^{-1/2} I_{S_{k}} \Delta \qq_{k} \\
&= \ee^{T} D^{1/2} \nabla f(\qq_{k}) + \alpha \ee^{T} D^{1/2}  I_{S_{k}} \Delta \qq_{k},
\end{align*}
where for the latter equality, we used the fact that $(D - A) \ee = \bold 0$.
From the proof of Theorem \ref{thm:reducedacl} we have that $\nabla f(\qq_k) \le \bold 0$ and $\Delta \qq_k \ge \bold 0$ $\forall k$. Hence, the last equality implies that
$$
\|D^{1/2}\nabla f(\qq_{k+1}) \|_1 \le \|D^{1/2}\nabla f(\qq_{k}) \|_1.
$$
Using the above inequality and $D^{1/2}\nabla f(\qq_{0}) = -\alpha \bss$ in \eqref{eq:vol_bd_k} we get 
$$
\|s\|_1 \ge \rho \mbox{vol}(S_k) \ \forall k.
$$
Since $S_k \to \mathcal{S}_*$ as $k\to \infty$ then $\|s\|_1 \ge \rho \mbox{vol}(\mathcal{S}_*)$. \textcolor{black}{To prove this use the fact that Algorithm \ref{acl:l1pg} is a convergent algorithm. Therefore, as Algorithm \ref{acl:l1pg} converges to the optimal solution $\qq_*$
then the set $S_k$ converges to $\mathcal{S}_*$, i.e., $S_k$ consists of the same elements as $\mathcal{S}_*$, thus inequality $\|s\|_1 \ge \rho \mbox{vol}(S_k) \ \forall k$ holds for $\mathcal{S}_*$ as well, i.e., $\|s\|_1 \ge \rho \mbox{vol}(\mathcal{S}_*)$.}
\end{proof}

%The following remark provides the strong convexity parameter of $f$ which is used in Theorem \ref{thm:aclanalysis} to bound the worst-case running time of 
%Algorithm \ref{acl:l1pg}.
%
%\begin{remark}\label{remark:1}
We are now ready to derive the overall iteration complexity and the total running time of Algorithm~\ref{acl:l1pg}. For this, we will make use of strong convexity of $f$ in \eqref{eq:f}. It is easy to see that $f$ is $\alpha$-strongly convex. Indeed, $Q$ in~\eqref{eq:f} can be rewritten as $Q = \alpha I  + {(1-\alpha)}\mathcal{L}/2$. Since $\mathcal{L} \succeq 0$, it follows that $Q \succeq \alpha I$.  However, Theorem \ref{thm:reducedacl} guarantees that for each iteration of Algorithm \ref{acl:l1pg}, one has $\text{supp}(\qq_k) \subseteq \mathcal{S}_*$ $\forall k$. Naturally, the function $f$, restricted to vectors with support in $\mathcal{S}_{*}$, has a better strong convexity parameter. Let $\mathcal{L}_{\mathcal{S}_*}$ be the principal sub-matrix of the normalized graph Laplacian $\mathcal{L} = I - D^{-1/2} A D^{-1/2}$ by removing the 
rows and columns with indices in $V\backslash\mathcal{S}_*$. It is clear that such restricted strong convexity parameter, when restricted to all vectors $\qq$ such that $\text{supp}(\qq) \subseteq \mathcal{S}_*$, is $\alpha + (1-\alpha)\lambda_{min}(\mathcal{L}_{\mathcal{S}_*})/2$, which, if $\lambda_{min}(\mathcal{L}_{\mathcal{S}_*}) > 0$, is larger than $\alpha$.
%Note that $\lambda_{min}(\mathcal{L}_{\mathcal{S}_*})$ depends on $\alpha$ via its dependence on ${\mathcal{S}_*}$, which, in turn, is correlated with $\alpha$. 
%However, for any vector $\qq$ such that $\text{supp}(\qq) \subseteq \mathcal{S}_*$ the restricted strong convexity parameter of $f$ can be much larger than $\alpha$. 
%is $\alpha + \frac{1-\alpha}{2}\lambda_{min}(\mathcal{L}_{\mathcal{S}_*})$.

Now consider the local conductance constant, defined in \cite{Chung07_localcutsLAA} as
$$
H (\mathcal{S}):= \min_{S\subset \mathcal{S}} \Phi(S).
$$
Note this latter definition differs from~\eqref{eq:phig} in that $H (\mathcal{S})$ measures the minimum conductance over all subsets of $\mathcal{S}$, as opposed to $\V$.
Suppose $\mathcal{G}$ is connected and let $\|\bss\|_1/\rho \le \mbox{vol}(\G)/2$, which, from Theorem \ref{thm:volume}, implies that $\mbox{vol}(\mathcal{S}_*) \le \mbox{vol}(\G)/2$. 
This is a reasonable assumption since, in the context of local graph clustering, it is not desired for the optimal support, $\mathcal{S}_*$, to have a volume larger than half of that of the whole graph, $\mathcal{G}$.
In \cite{Chung07_localcutsLAA}, a local Cheeger inequality is proved for the Dirichlet eigenvalue $\lambda_{min}(\mathcal{L}_{\mathcal{S}_*})$ of the induced subgraph on $\mathcal{S}_*$. For cases when such induced subgraph is connected, the lower bound given in~\cite{Chung07_localcutsLAA} is in the form of
\begin{equation}\label{eq:localcheeger}
0 < \frac{\left(H (\mathcal{S}_*)\right)^2}{2} \le \lambda_{min}(\mathcal{L}_{\mathcal{S}_*}).
\end{equation}

%The constant $H (\mathcal{S}_*)$ in~\eqref{eq:localcheeger} is positive 
%For this restricted strong convexity parameter to be larger than $\alpha$, we need to have $\lambda_{min}(\mathcal{L}_{\mathcal{S}_*}) > 0$. In fact, if the subgraph of $\G$ induced by the nodes in $\mathcal{S}_*$ is connected, then by using the Perron-Frobenius Theorem for matrix $I - \mathcal{L}_{\mathcal{S}_*}$, it follows that $\lambda_{min}(\mathcal{L}_{\mathcal{S}_*})>0$. It can be shown that, for any tolerance parameter in the termination condition, the latter condition is always satisfied by the output of Algorithm \ref{acl:l1pg}. In other words, the optimal support $\mathcal{S}_*$ of the solution corresponds to a connected induced subgraph of $\G$. Indeed, Step 4 of Algorithm \ref{acl:l1pg} ensures that the procedure only touches the neighbors of the current non-zero nodes. Therefore, if the input reference set of nodes (captured by vector $\bss$) corresponds to connected induced subgraph of $\G$, the support of the output of Algorithm \ref{acl:l1pg} and consequently $\mathcal{S}_*$ correspond to connected induced subgraphs of $\G$.
%if the subgraph of $\G$ induced by the nodes in $\mathcal{S}_*$ is connected \cite{Chung07_localcutsLAA}.
%then by using the Perron-Frobenius Theorem for matrix $I - \mathcal{L}_{\mathcal{S}_*}$, it follows that $\lambda_{min}(\mathcal{L}_{\mathcal{S}_*})>0$. 
Luckily, it can be shown that, for any tolerance parameter in the termination condition, the optimal support $\mathcal{S}_*$ from Algorithm \ref{acl:l1pg} corresponds to a connected induced subgraph of $\G$. Indeed, Step 4 of Algorithm \ref{acl:l1pg} ensures that the procedure only touches the neighbors of the current non-zero nodes. Therefore, if the input reference set of nodes (captured by vector $\bss$) \textcolor{black}{corresponds to connected induced subgraphs} of $\G$, the support of the output of Algorithm \ref{acl:l1pg} and consequently $\mathcal{S}_*$ correspond to connected induced subgraphs of $\G$.
Note that, in the cases where $\G$ is disconnected, the above reasoning still holds as long as $\rho$ is chosen such that $\|\bss\|_1/\rho \le \text{vol}(\tilde{\mathcal{G}})/2$, where $\tilde{\mathcal{G}} \subset \mathcal{G}$ is the largest connected component of $\mathcal{G}$ that \textcolor{black}{includes a reference node, i.e., a node $i$ that satisfies $\bss(i)\neq 0$} (otherwise, for the output of Algorithm \ref{acl:l1pg}, we might have $\mathcal{S}_* = \tilde{\mathcal{G}}$, %the optimal support might be as large as the largest connected component that includes the reference node
which implies $\lambda_{min}(\mathcal{L}_{\mathcal{S}_*})=0$).

Thus, using \eqref{eq:localcheeger}, we can define the restricted strong convexity parameter of $f$ as
\begin{equation}\label{eq:strparam}
\mu := \alpha + \frac{1-\alpha}{4}\left(H (\mathcal{S}_*)\right)^2.
\end{equation}
%which can be larger than $\alpha$ depending on the local conductance constant $H (\mathcal{S}_*)$.
We are not aware of any better lower bound for $\lambda_{min}(\mathcal{L}_{\mathcal{S}_*})$. In fact, we believe that to lower bound this constant, one needs to make some strong assumptions about the target cluster that includes the reference node. As this is not our primary objective in this paper, we leave this for future work.
%However, in case that the assumptions for the combinatorial guarantees of APPR we will discuss later in the section that the same guarantees hold 
%for $\ell_1$-reg. PR.
%If there exists a subset of $\mathcal{S}_*$ that has very small conductance then $H (\mathcal{S}_*)$ is very small as well, otherwise, if there is no better subset than $\mathcal{S}_*$ then $H (\mathcal{S}_*)$ 
%can be as large as $\Phi(\mathcal{S}_*)$. 
%The former case may indicate that the optimal solution of $\ell_1$-reg. PR does not correspond to a cluster of small conductance for the chosen reference node (although there exists one),
%which is due to bad parameter tuning or even due to the spectral properties of $\ell_1$-reg. PR.
%Therefore, in case that $\ell_1$-reg. PR is successful we expect $H (\mathcal{S}_*)$ to be close to $\Phi(\mathcal{S}_*)$.

Using the restricted strong convexity parameter~\eqref{eq:strparam}, Theorem~\ref{thm:aclanalysis} below gives the overall iteration complexity and total running time\footnote{Iteration complexity refers to the worst-case number of iterations to satisfy the termination criterion and running time refers to the total amount of work, i.e., the per-iteration cost times iteration complexity.} of Algorithm~\ref{acl:l1pg}. 

\begin{theorem}
\label{thm:aclanalysis}
%For $\mathcal{S}_*:= \mbox{supp}(\qq_*)$, let $\mathcal{L}_{\mathcal{S}_*}$ be the principal sub-matrix of the normalized graph Laplacian $\mathcal{L} = I - D^{-1/2} A D^{-1/2}$ by removing the 
%rows and columns with indices in $V\backslash\mathcal{S}_*$, and define $$\mu := (\alpha + \frac{1-\alpha}{2}\lambda_{min}(\mathcal{L}_{\mathcal{S}_*})).$$
%{\color{black}Let $\mathcal{S}_*:= \mbox{supp}(\qq_*)$ and define $\mu$ as in \eqref{eq:strparam}.}
Algorithm \ref{acl:l1pg} with $\|\bss\|_\infty \ge \rho$ requires at most 
\begin{equation}
T \in \mathcal{O}\left(\frac{1}{\mu} \log \left( \frac{2}{\epsilon^2 \rho^2 \alpha^2 \min_j d_j} \right)\right),
\label{eq:16}
\end{equation}
iterations 
to converge to a solution that satisfies the termination criterion in Step 2, where $\mu$ is as in~\eqref{eq:strparam}. 
Furthermore, the running time of Algorithm \ref{acl:l1pg} is at most
\textcolor{black}{
\begin{equation}
\mathcal{O}\left(\frac{(|\mathcal{S}_*| + \widehat{\mbox{vol}}(\mathcal{S}_*))}{\mu} \log \left( \frac{2}{\epsilon^2 \rho^2 \alpha^2 \min_j d_j} \right)\right),
\label{eq:running_time}
\end{equation}
where $\mathcal{S}_*$ is defined in~\eqref{eq:opt_support} and $\widehat{\mbox{vol}}(\mathcal{S}_*)$ is the volume of $\mathcal{S}_*$ by assuming that the edges of the graph are unweighted, i.e., the sum of all neighbors for each node in $\mathcal{S}_*$.
If we further suppose that $|\mathcal{S}_*|, \widehat{\mbox{vol}}(\mathcal{S}_*) \in \mathcal{O}(\text{vol}(\mathcal{S}_*))$}, \textcolor{black}{then using Theorem~\ref{thm:volume} and $\|s\|_1=1$~\eqref{eq:running_time} simplifies to
\begin{equation}
\mathcal{O}\left(\frac{2}{\rho\mu} \log \left( \frac{2}{\epsilon^2 \rho^2 \alpha^2 \min_j d_j} \right)\right).
\label{eq:running_time_2}
\end{equation}
}
\end{theorem}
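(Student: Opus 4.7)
The plan is to combine three ingredients: (i) the standard linear convergence of ISTA on $L$-smooth, $\mu$-strongly convex composite problems; (ii) the $1$-Lipschitz continuity of $\nabla f$ to turn iterate convergence into gradient convergence; and (iii) the local per-iteration cost structure of Algorithm~\ref{acl:l1pg}. The crucial enabling observation, already established by Theorem~\ref{thm:reducedacl}, is that every iterate $\qq_k$, as well as $\qq_*$, has support contained in $\mathcal{S}_*$, so one can invoke the restricted strong convexity constant $\mu$ of~\eqref{eq:strparam} (derived via the local Cheeger bound~\eqref{eq:localcheeger}) to control the ISTA contraction rate, rather than the much weaker global constant $\alpha$.

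First I would apply the standard ISTA distance-contraction inequality with $L=1$ and restricted strong convexity $\mu$ to obtain
\[
\|\qq_k - \qq_*\|_2^2 \le (1-\mu)^k \|\qq_0 - \qq_*\|_2^2 = (1-\mu)^k \|\qq_*\|_2^2.
\]
To obtain a clean constant inside the logarithm, I would bound $\|\qq_*\|_2$ via suboptimality: since $\psi(\qq_*) \le \psi(\qq_0) = 0$ and $\psi$ is $\mu$-strongly convex over the subspace of vectors supported in $\mathcal{S}_*$, one has $\tfrac{\mu}{2}\|\qq_*\|_2^2 \le -\psi(\qq_*)$, which after rewriting $\psi(\qq_*)$ using the first-order optimality conditions~\eqref{optcondl1pg} and applying Cauchy--Schwarz on $\alpha \lin{D^{-1/2}\bss,\qq_*} \le \alpha\|\qq_*\|_2$ (using $\|\bss\|_1=1$ and $d_j\ge 1$) yields a bound of the form $\|\qq_*\|_2 \le \mathcal{O}(\alpha/\mu)$, independent of $\rho$ and $\epsilon$. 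Next, I would translate iterate accuracy into the termination criterion: by $\ell_2$-Lipschitzness of $\nabla f$ and the fact that~\eqref{optcondl1pg} implies $\|D^{-1/2}\nabla f(\qq_*)\|_\infty \le \rho\alpha$,
\[
\|D^{-1/2}\nabla f(\qq_k)\|_\infty \le \rho\alpha + \frac{\|\qq_k-\qq_*\|_2}{\sqrt{\min_j d_j}},
\]
so it suffices to drive $\|\qq_k-\qq_*\|_2 \le \epsilon\rho\alpha\sqrt{\min_j d_j}$. Substituting the contraction rate and using $-\log(1-\mu) \ge \mu$ produces~\eqref{eq:16}.

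Finally, for the total work, I would inspect Steps~4--7 of Algorithm~\ref{acl:l1pg}: updating $\qq_{k+1}$ in Step~4 costs $\mathcal{O}(|S_k|)$, while updating the gradient in Steps~5--6 costs $\mathcal{O}(|S_k|+\widehat{\mbox{vol}}(S_k))$ since only coordinates in $S_k$ and their neighbors have their gradient components modified. By part~\ref{thm:ii_2} of Theorem~\ref{thm:reducedacl}, $S_k \subseteq \mathcal{S}_*$ for every $k$, so the per-iteration cost is at most $\mathcal{O}(|\mathcal{S}_*| + \widehat{\mbox{vol}}(\mathcal{S}_*))$; multiplying by $T$ gives~\eqref{eq:running_time}. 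Under the assumption $|\mathcal{S}_*|,\widehat{\mbox{vol}}(\mathcal{S}_*) \in \mathcal{O}(\mbox{vol}(\mathcal{S}_*))$, Theorem~\ref{thm:volume} together with $\|\bss\|_1=1$ gives $\mbox{vol}(\mathcal{S}_*) \le 1/\rho$, collapsing~\eqref{eq:running_time} to~\eqref{eq:running_time_2}. The main technical obstacle is obtaining the right bound on $\|\qq_*\|_2$: a naive bound such as $\sqrt{|\mathcal{S}_*|}\|\qq_*\|_\infty$ would contaminate the logarithmic factor with $|\mathcal{S}_*|$, so the argument must carefully exploit both $\psi(\qq_*) \le 0$ and the optimality conditions to yield a bound that depends only on $\mu$, $\alpha$, and universal constants.
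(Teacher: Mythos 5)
Your proof is correct and yields the stated bounds, but it takes a genuinely different route from the paper's for the iteration-complexity part. The paper works entirely in function values: it combines the sufficient-decrease inequality $\psi(\qq_{k+1}) \le \psi(\qq_k) - \tfrac{1}{2}\|\nabla_{S_k}\widehat{\psi}(\qq_k)\|_2^2$ (valid because the iterates stay in the nonnegative orthant, where $\psi$ coincides with the smooth function $\widehat{\psi}(\qq) = \rho\alpha\,\ee^T D^{1/2}\qq + f(\qq)$) with a Polyak--{\L}ojasiewicz-type bound $\psi(\qq_k)-\psi(\qq_*) \le \tfrac{1}{2\mu}\|\nabla_{S_k}\widehat{\psi}(\qq_k)\|_2^2$ coming from restricted strong convexity, obtaining $\psi(\qq_{k+1})-\psi(\qq_*) \le (1-\mu)(\psi(\qq_k)-\psi(\qq_*))$; it then reads the gradient bound \emph{directly} off the sufficient-decrease inequality, $\|\nabla_{S_k}\widehat{\psi}(\qq_T)\|_\infty^2 \le 2\hat{\epsilon}$, so it never needs a bound on $\|\qq_*\|_2$ nor the Lipschitz translation from iterates to gradients. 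You instead contract distances, $\|\qq_k-\qq_*\|_2^2 \le (1-\mu)^k\|\qq_*\|_2^2$, bound $\|\qq_*\|_2 \le 2\alpha/\mu$ via strong convexity at $\qq_0=0$, and convert to the termination criterion through $1$-Lipschitzness of $\nabla f$ and $\|D^{-1/2}\nabla f(\qq_*)\|_\infty \le \rho\alpha$; since $\mu \ge \alpha$, the resulting $\log(4/(\mu^2\epsilon^2\rho^2\min_j d_j))$ is dominated by the paper's logarithmic factor, so the bounds agree up to constants. Two points deserve care in your version: (a) the ``standard'' distance-contraction lemma is stated for globally strongly convex $f$, whereas here only restricted strong convexity on $\mathcal{S}_*$ is available; the argument survives because Theorem~\ref{thm:reducedacl} guarantees $\mbox{supp}(\qq_k), \mbox{supp}(\qq_*) \subseteq \mathcal{S}_*$, so $\qq_k - \qq_*$ is supported in $\mathcal{S}_*$ and one can check $\|(I-Q)(\qq_k-\qq_*)\|_2 \le \sqrt{(1-\alpha)(1-\mu)}\,\|\qq_k-\qq_*\|_2$ before applying nonexpansiveness of the prox --- this should be spelled out rather than cited as standard; (b) your Cauchy--Schwarz step $\alpha\lin{D^{-1/2}\bss,\qq_*} \le \alpha\|\qq_*\|_2$ implicitly uses $d_j \ge 1$, consistent with the paper's unweighted setting but worth stating. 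The running-time part of your argument (per-iteration cost $\mathcal{O}(|\mathcal{S}_*|+\widehat{\mbox{vol}}(\mathcal{S}_*))$ via $S_k \subseteq \mathcal{S}_*$, then Theorem~\ref{thm:volume} with $\|\bss\|_1=1$) is essentially identical to the paper's. What your route buys is a quantitative handle on iterate error $\|\qq_k - \qq_*\|_2$, which the paper's function-value argument never provides; what the paper's route buys is the avoidance of any bound on $\|\qq_*\|_2$ and a cleaner passage to the $\ell_\infty$ gradient termination test.
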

\begin{proof}
Let the assumption about $\bss$ from Theorem \ref{thm:reducedacl} hold. Then from Theorem \ref{thm:reducedacl} we have that $\qq_k\ge 0$ $\forall k$, i.e., we always remain in the the non-negative orthant. Denoting the restriction of $\psi(\qq)$ to $\qq\geq 0$, by 
$$\widehat{\psi}(\qq) := \rho \alpha \ee^TD^{1/2}\qq + f(\qq),$$ 
it follows that $\psi(\qq)=\hat{\psi}(\qq)$ for all $\qq$ in the non-negative orthant. 
From $1$-Lipschitz continuity of $\nabla f$ w.r.t. $\ell_2$ norm, it follows that $\widehat{\psi}$ is also smooth with the same parameter, i.e., $1$. 
Hence, for any $\qq_{k}$ from Algorithm~\ref{acl:l1pg}, we have
\begin{equation}\label{eq:liphatpsi}
\widehat{\psi}(\qq) \leq \psi(\qq_{k}) + (\qq-\qq_{k})^{T} \nabla \widehat{\psi}(\qq_{k}) + \frac{1}{2} \|\qq_{k} - \qq \|_{2}^{2}.
\end{equation}
Since $\qq_{k+1}\ge 0$ (see Theorem \ref{thm:reducedacl}),\ $\qq_{k+1} - \qq_k = I_{S_k} \Delta \qq_k$ and $\Delta \qq_k = - \nabla_{S_k} \widehat{\psi}(\qq_k)$ we have that 
\begin{align}\label{eq:scmin}
{\psi}(\qq_{k+1}) \leq  \psi(\qq_{k}) -\frac{1}{2}\|\nabla_{S_k} \widehat{\psi}(\qq_{k})\|_{2}^{2}.
\end{align}
We have that $f$ is $\mu$-restricted strongly convex when restricted to all vectors $\qq$ such that $\mbox{supp}(\qq) \subseteq \mathcal{S}_*$, where $\mu := (\alpha + (1-\alpha)\lambda_{min}(\mathcal{L}_{\mathcal{S}_*})/2)$.
Therefore, $\psi$ is $\mu$-restricted strongly convex as well and we have 
$$
\psi(\qq_k)-\psi(\qq_*) \le \frac{1}{2\mu}\|g\|_2^2 \quad \forall g\in \partial \psi(\qq_k),
$$
where $\partial \psi(\qq_k)$ is the sub-differential of $\psi$ at $\qq_k$.\ 
Notice that $I_{S_k}\nabla \widehat{\psi}_{S_k}(\qq_{k})$ is a valid sub-gradient of $\psi$ at $\qq_k$. This gives us 
\begin{equation}\label{sclb2}
\psi(\qq_k)-\psi(\qq_*) \le \frac{1}{2\mu}\|\nabla_{S_k} \widehat{\psi}(\qq_{k})\|_{2}^{2}.
\end{equation}
Combining \eqref{eq:scmin} and \eqref{sclb2} and subtracting $\psi(\qq_*)$ from both sides we get
$$
\psi(\qq_{k+1})-\psi(\qq_*) \leq \left(1 - \mu\right)(\psi(\qq_k)-\psi(\qq_*)),
$$
which implies linear convergence.\
Applying the last inequality recursively we get that Algorithm \ref{acl:l1pg} requires at most $T \in \mathcal{O}({(1/\mu)\log ({1}/{\hat{\epsilon}})})$
iterations to obtain a solution $\qq_T$ such that $\psi(\qq_T) - \psi(\qq_*) \le \hat{\epsilon}$.

From \eqref{eq:scmin} we have that
$$
\psi(\qq_*) \leq \psi(\qq_{k}) - \frac{1}{2}\|\nabla_{S_k} \widehat{\psi}(\qq_{k})\|_{2}^{2} \quad \forall k.
$$
Using the above and $\psi(\qq_T) - \psi(\qq_*) \le \hat{\epsilon}$,\ we get $\|\nabla_{S_k} \widehat{\psi}(\qq_{T})\|_\infty^2 \le 2\hat{\epsilon}$, which is equivalent to
$$
-\rho \alpha - \left(\frac{2\hat{\epsilon}}{d_i}\right)^{1/2} \le \frac{\nabla_i f(\qq_T)}{d^{1/2}_i} \le \rho \alpha + \left(\frac{2\hat{\epsilon}}{d_i}\right)^{1/2}
$$
$\forall i\in S_k$. From Theorem \ref{thm:reducedacl} we have that $\nabla_i f(\qq_T) > -\rho \alpha d_i^{1/2}$ $\forall i\in [n] \backslash S_k$.\
Let $\epsilon\in(0,1)$ be the accuracy parameter of Algorithm \ref{acl:l1pg}. As a result, by setting $\hat{\epsilon} := (\epsilon^2 \rho^2 \alpha^2 \min_j d_j)/2$ and using the fact that $\nabla f(\qq_k) \le 0$ $\forall k$ from Lemma \ref{thm:reducedacl}, we get that after 
\begin{equation*}
T \in \mathcal{O}\left(\frac{1}{\mu} \log \left( \frac{2}{\epsilon^2 \rho^2 \alpha^2 \min_j d_j} \right)\right)
\end{equation*}
%By evaluating~\eqref{eq:scmin} at $\qq_T$ and the fact that $\psi(\qq_T) - \psi^* \le \hat{\epsilon}$, we get $\|\nabla \psi(\qq_T)\|_\infty^2 \le 2\hat{\epsilon}$, which is equivalent to
%$$
%-\rho \alpha - \left(\frac{2\hat{\epsilon}}{d_i}\right)^{1/2} \le \frac{\nabla_i f(\qq_T)}{d^{1/2}_i} \le -\rho \alpha + \left(\frac{2\hat{\epsilon}}{d_i}\right)^{1/2}
%$$
%$\forall i$.
%As a result, by setting $\hat{\epsilon} := (\rho^2 \alpha^{2} \min_j d_j)/2$ and using the fact that $\nabla f(\qq_k) \le 0$ $\forall k$ from Theorem \ref{thm:reducedacl}, we get that after 
%\begin{equation}\label{eq:16}
%T \ge \frac{1}{\alpha} \log \left( \frac{2}{\rho^2 \alpha^{2} \min_j d_j} \right)
%\end{equation}
iterations the output of Algorithm \ref{acl:l1pg} satisfies $-(1+\epsilon)\rho \alpha d^{1/2}_i \le \nabla_i f(\qq_T) \le 0$ $\forall i$, which is the termination criterion in Step $2$ of Algorithm \ref{acl:l1pg}. 

\textcolor{black}{From Theorem \ref{thm:reducedacl} we have that $S_k \subseteq \mathcal{S}_*$ and $|S_k| \le |\mathcal{S}_*|$ $\forall k$.
The set $S_k$ in Step $3$ of Algorithm \ref{acl:l1pg} can be updated in $\mathcal{O}(\widehat{\mbox{vol}}(S_{k-1}))$ operations, where $\widehat{\mbox{vol}}(S_{k-1})$ is the volume of $S_{k-1}$ by assuming that the edges of the graph are unweighted, 
 i.e., the sum of all neighbors for each node in $\mathcal{S}_*$.
The quantity $\widehat{\mbox{vol}}(S_{k-1})$ is upper bounded by $\widehat{\mbox{vol}}(\mathcal{S}_*)$. Therefore, Step $3$ costs at most $\mathcal{O}(\widehat{\mbox{vol}}(\mathcal{S}_*))$ operations.}
Step $4$ of Algorithm \ref{acl:l1pg} requires at 
most $\mathcal{O}(|\mathcal{S}_*|)$ operations.\ \textcolor{black}{Similarly, Steps $5$ and $6$ require at most $\mathcal{O}(|\mathcal{S}_*| + \widehat{\mbox{vol}}(\mathcal{S}_*))$ operations}.\  
%This can be upper bounded by $\mathcal{O}(|\mathcal{S}_*|(1+\max_{i\in \mathcal{S}_*} d_i))$.
Finally, Step $7$ does not perform any computations.\ Putting the operations performed in all of the steps together, using the iteration complexity result in \eqref{eq:16} and the result of Theorem \ref{thm:volume}, we get~\eqref{eq:running_time} and~\eqref{eq:running_time_2}.
\end{proof}

\begin{remark} \textcolor{black}{The assumption $|\mathcal{S}_*|, \widehat{\mbox{vol}}(\mathcal{S}_*) \in \mathcal{O}(\text{vol}(\mathcal{S}_*))$} in the latter part of Theorem~\ref{thm:aclanalysis} holds for many types of graphs, e.g., unweighted. Indeed, such assumption is commonly made in the related literature, including APPR in \cite{ACL06} and many others \cite{AL08,ST13,OZ14,KG14, veldticml2016}.\end{remark}

\begin{remark} For unweighted graphs, according to Theorem~\ref{thm:aclanalysis}, the worst-case running time of Algorithm \ref{acl:l1pg} is $\mathcal{O}\left(\log \left( {2}/{(\epsilon^2 \rho^2 \alpha^2)} \right)/(\rho\mu)\right)$ (ignoring small terms and using $\|\bss\|_1 \le 1$), where $\mu$ was defined in~\eqref{eq:strparam}. However,~\cite[Theorems 1 and 5]{ACL06} state that the worst-case running time of APPR is $\mathcal{O}(1/(\rho\alpha))$.
%While we haven't managed to theoretically establish a relationship between $|\mathcal{S}_*| + \mbox{vol}(\mathcal{S}_*)$ and $1/\rho$, in Section \ref{sec:examples}, we provide empirical evidence showing that the number of non-zeros of the output of Algorithm \ref{acl:l1pg} 
%is indeed similar to that of the output of APPR. Since APPR never accesses nodes of the graph that are not
%part of the output solution, we can conjecture that both algorithms, in practice, have roughly the same running time. This conjecture is certainly corroborated by the numerical examples in Section \ref{sec:examples}.
%As a result, our bounds show strict improvement compared to the related ones of APPR, since we get better dependence on $\alpha$ through $\mu$ (recall that $\mu > \alpha$).
Despite the fact that $\mu \geq \alpha$, since~\eqref{eq:running_time_2} involves $H (\mathcal{S}_*)$ as well as a ``$\log$'' factor, it is unfortunately difficult to directly compare the worst-case running time of Algorithm \ref{acl:l1pg} with that of APPR.\end{remark}

It is possible to replace the output of APPR with the solution of~\eqref{l1pg} and still maintain the combinatorial guarantees for PageRank-Nibble as in~\cite[Theorem 7]{ACL06}; see also the discussion in Section \ref{sec:background}.\ 
This can be shown using the fact that ISTA Algorithm \ref{acl:l1pg} for $\ell_1$-regularized PR satisfies the invariance property of APPR (see~\cite[Section 3]{ACL06}).
Moreover, all algorithms at termination satisfy $\|D^{-1/2} \nabla f(q_k)\|_\infty \le \rho \alpha$.
The proof is identical to that of Theorem $7$ in \cite{ACL06} and is, therefore, omitted. 
Relatedly, to ensure that the solutions of Algorithm \ref{acl:l1pg} and APPR share the same theoretical clustering guarantees, the parameter $\rho$ of Algorithm \ref{acl:l1pg} must be set with respect to that of APPR. More specifically, let $\rho, \tilde{\rho}\in(0,1)$ be the parameters of the $\ell_1$-regularized PR problem \eqref{l1pg} and APPR, respectively. Moreover, let the vector $\bss \geq0$ be chosen such that $\bss(i)\ge \max(\rho,\tilde{\rho})$ for all $i$ with $\bss(i)\neq 0$, e.g., $\bss(i)=1$ for the reference node $i$ and zero elsewhere. Then APPR algorithm at termination gives an output which satisfies~\eqref{eq:termcritacl} while Algorithm \ref{acl:l1pg} is terminated when $\|D^{-1/2}\nabla f(\qq_k)\|_\infty \le (1+\epsilon)\rho \alpha$. Hence, one can set $\rho\le\tilde{\rho}/(1+\epsilon)$ to ensure that the termination criterion of Algorithm \ref{acl:l1pg} matches that of APPR; see Section~\ref{sec:examples} for numerical experiments.

\section{Experiments}
\label{sec:examples}
%This section describes experimental results for our implementations of the algorithms described in this paper.

In this section, we numerically demonstrate that $\ell_1$-reg.\ PR problem achieves in practice similar graph cut guarantees as APPR.
The experiments are performed on a single thread of a 64-core machine
with four 2.4 GHz 16-core AMD Opteron 6278 processors.\ The
implementations are written using C++ code and compiled with the g++
compiler version 4.8.0.  We use a set of undirected, unweighted
real-world graphs from the Stanford Network Analysis Project
({\scriptsize \url{http://snap.stanford.edu/data}}), whose sizes are
shown in Table~\ref{table:sizes}.\ 
%All implementations will be part of the Ligra project ({\scriptsize \url{https://github.com/jshun/ligra}}).
\begin{table}[htb]
\caption{Graph inputs. $^\dag$Number of unique undirected edges.}
%\vspace{-3pt}
\centering
\begin{tabular}[!t]{l|c|c}
{Input Graph} & Num. Vertices & Num. Edges$^\dag$ \\
%& Vertices & Edges & & Ecc. \\
\hline\hline
wiki-Talk & 2,394,385 & 4,659,565\\
soc-LJ & 4,847,571 & 42,851,237\\
cit-Patents & 6,009,555 & 16,518,947\\
com-Orkut & 3,072,627 & 117,185,083\\
\end{tabular}
\label{table:sizes}
%\vspace{-6pt}
\end{table}
%\vspace{-0.2cm}
We present the performance of greedy and heuristic versions of APPR and ISTA.
In particular, in the following figures \textsc{APPR greedy} is Algorithm \ref{acl:cd_ppr} where 
in step $3$ we select the $i$'th coordinate with the largest partial derivative $\nabla_i f(q_k)$ in absolute value. \textsc{APPR heuristic} is 
Algorithm \ref{acl:cd_ppr} where we select approximately the $i$'th coordinate with the largest $\nabla_i f(q_k)$ in absolute value.
In particular, a priority queue of coordinates is maintained which initially contains the starting vertex only. 
On each iteration we select the highest-priority coordinate in the queue and update the coordinate and its neighbors accordingly. 
For each neighbor, insert it in the queue if it is above the threshold with priority equal to the chosen coordinate.
Note that this is a heuristic because we select coordinates based on their priority when they are initially inserted in the queue, and do not update their priorities later on.
It is important to mention that the heuristic versions of the algorithms are guaranteed to converge in theory 
but not with linear convergence rate. However, there exist examples where one can maintain the linear convergence rate, as discussed in Section $5$ in \cite{drt06}.\
%The Cheeger-like guarantees on the output quality hold for the heuristic versions as well.
%For block-ISTA we select $20\%$ of $S_k$ $\forall k$ in a greedy fashion.
%For single-ISTA we set $\tau_k=1$ $\forall k$ and we also implement greedy and heuristic versions of it.
%For the greedy versions of block-ISTA we select coordinate based on the absolute values of $\nabla_i f(q_k) + \rho\alpha d_i^{1/2}$,
%as stated in Algorithm \ref{acl:blcd}.

For all experiments we set $s_v = 1$ and zero elsewhere, where the coordinate/node $v$ is chosen 
based on a search of over $10^4$ starting nodes. We used the starting vertex that gave the best conductance.\
We conduct all experiments by fixing $\alpha=0.1$ and choose the $\rho$
values empirically such that we get clusters with at least $100$ nodes each. This agrees with the observations 
in \cite{LLDM11} regarding the size of local clusters in large-scale graphs.

%\subsection{$\ell_1$-reg. PR $\approx$ APPR}
We use the same rounding procedure as the one described in Section 2.2 in \cite{ACL06} for the original APPR algorithm,
which is based on the conductance criterion.\ In Figure \ref{fig:conductanveVsCluster} we present the conductance criterion 
($y$-axis) versus the volume of the clusters ($x$-axis) produced by the sweep procedure in increasing order. All algorithms obtain approximately 
the same conductance value after the rounding procedure. The number of non-zeros of the output for each 
algorithm is given in Table~\ref{table:nnz}. Notice that the output of the $\ell_1$-reg.~PR problem, which is obtained by ISTA,
has at most the same number of non-zeros as the greedy and the heuristic versions of APPR.
\begin{table}[htb]
\caption{Number of non-zeros for the output solution $p_k$ of each algorithm for the 
four experiments in Figure \ref{fig:conductanveVsCluster}.}
%\vspace{-3pt}
\centering
\begin{tabular}[!t]{l|c|c|c}
{Input Graph} &  \textsc{APPR greedy}& \textsc{APPR heur.} & ISTA \\
%& Vertices & Edges & & Ecc. \\
\hline\hline
wiki-Talk      & 326 & 334 & 326 \\
soc-LJ        & 159 & 159 & 159 \\
cit-Patents  & 210 & 211 & 198 \\
com-Orkut & 447 & 448 & 442\\
\end{tabular}
\label{table:nnz}
%\vspace{-6pt}
\end{table}

\begin{figure}[ht]
%\vspace{-10pt}
\centering
\subfigure[{wiki-Talk, $\alpha=0.1$, $\rho=10^{-5}$}]{
\includegraphics[scale=0.45]{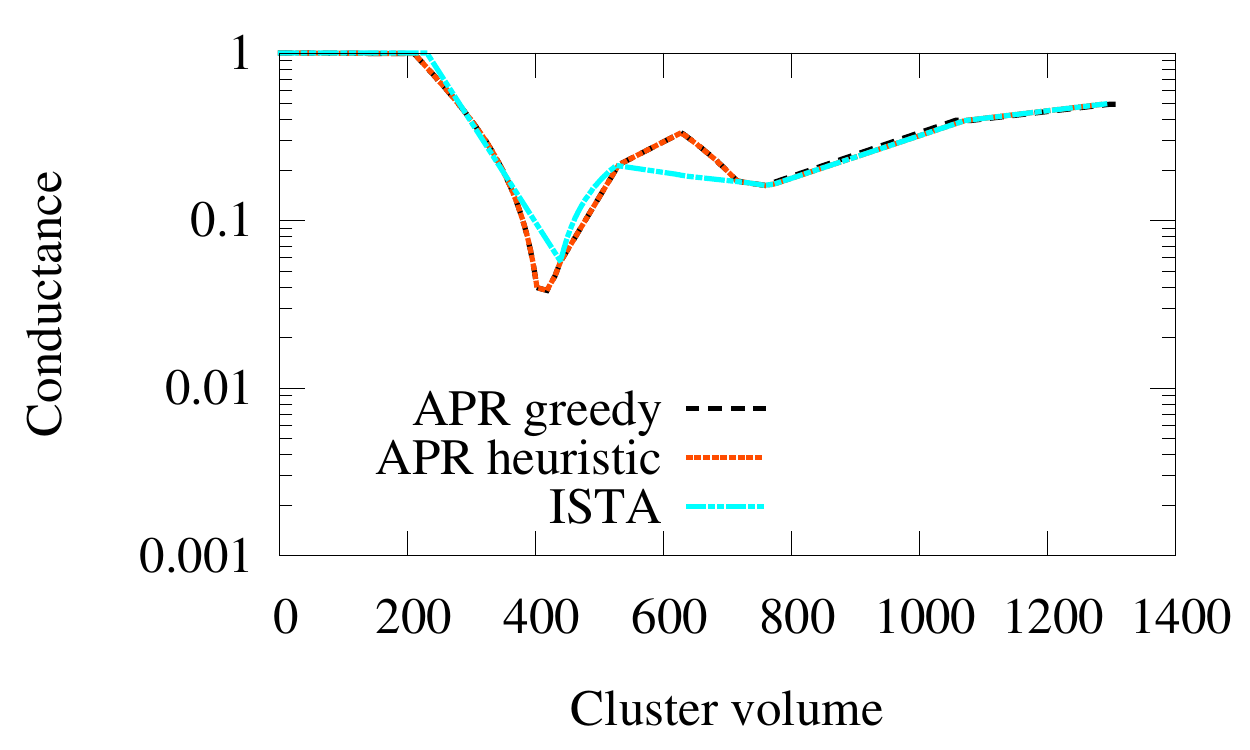}
}
\subfigure[{soc-LJ, $\alpha=0.1$, $\rho=10^{-5}$}]{
\includegraphics[scale=0.45]{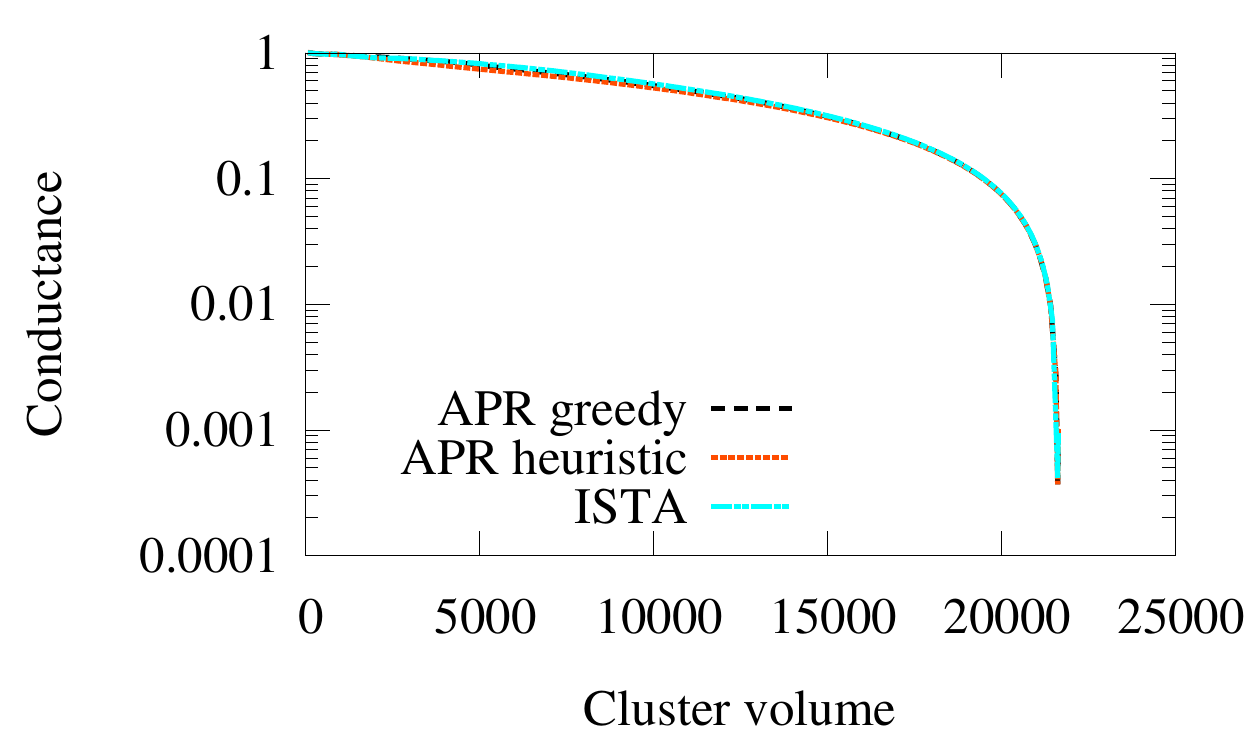}
}
\subfigure[{cit-Patents, $\alpha=0.1$, $\rho=10^{-5}$}]{
\includegraphics[scale=0.45]{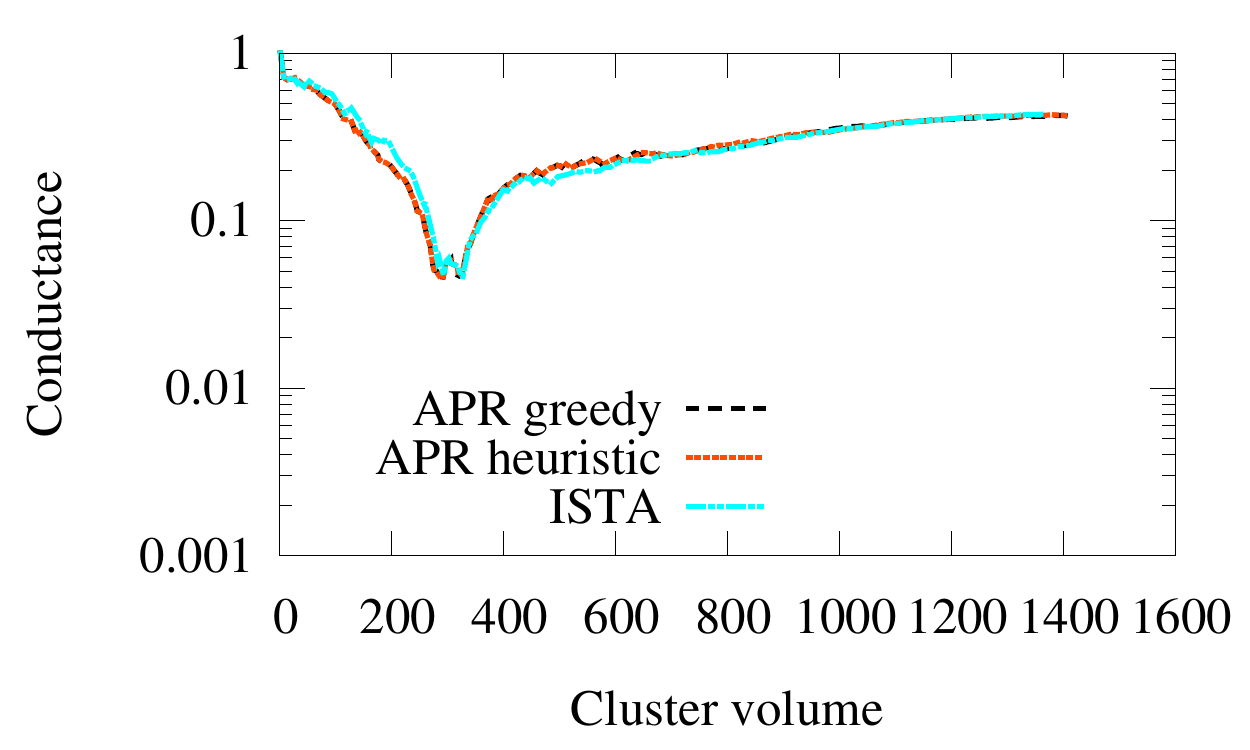}
}
\subfigure[{com-Orkut, $\alpha=0.1$, $\rho=10^{-5}$}]{
\includegraphics[scale=0.45]{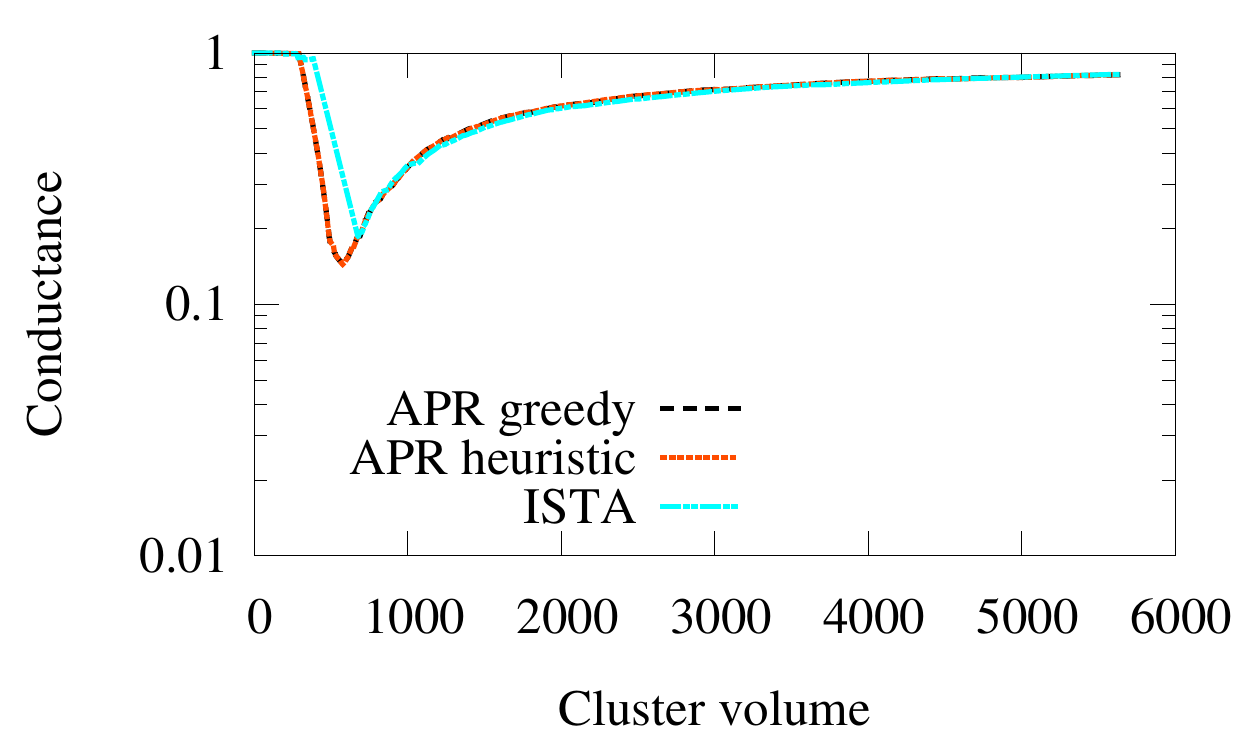}
}
\caption{Conductance vs. cluster volume. The axes of all plots are in log-scale. 
This figure shows the conductance criterion for the clusters which are produced by the sweep procedure 
applied on the output of each algorithm. The volume of the clusters is shown in increasing size.}
\label{fig:conductanveVsCluster}
%\vspace{-2pt}
\end{figure}

%\subsection{Running time}
%{\color{red}
%In Figure \ref{fig:termcritVSruntime} we plot the value of the quantity $\|D^{-1/2} \nabla f(\qq_k)\|_\infty$ against 
%the running time of each algorithm.\ The quantity $\|D^{-1/2} \nabla f(\qq_k)\|_\infty$ is used as a termination 
%criterion for all algorithms.\ 
%Notice that ISTA and APPR have similar running times, this is expected since they also have similar worst-case running time guarantees;
%see discussion after Theorem \ref{thm:aclanalysis}.
%
%\begin{figure}[ht]
%%\vspace{-10pt}
%\centering
%\subfigure[{wiki-Talk, $\alpha=0.1$, $\rho=10^{-5}$}]{
%\includegraphics[scale=0.28]{wiki}
%}
%\subfigure[{soc-LJ, $\alpha=0.1$, $\rho=10^{-5}$}]{
%\includegraphics[scale=0.28]{soc}
%}
%\subfigure[{cit-Patents, $\alpha=0.1$, $\rho=10^{-5}$}]{
%\includegraphics[scale=0.28]{cit}
%}
%\subfigure[{com-Orkut, $\alpha=0.1$, $\rho=10^{-5}$}]{
%\includegraphics[scale=0.28]{orkut}
%}
%\caption{Termination criterion, i.e., $\|D^{-1/2}\nabla f(q_k)\|_\infty$, vs.~running time.\ The axes of all plots are in semi-log-vertical scale.}
%\label{fig:termcritVSruntime}
%%\vspace{-2pt}
%\end{figure}
%}

%\input{figure-runningtime}

%\begin{figure}[!t]
%\includegraphics[width=0.9\columnwidth]{figs/pdfs/objective_cit.pdf}
%\caption{$\|D^{-1/2}\nabla f(q_k)\|_\infty$ versus iteration number on cit-Patents ($\alpha=0.1, \rho=10^{-4}$)}
%\label{fig:objective}
%\end{figure}

%
%\input{figure-varyRho}
%\input{figure-varyAlpha}

\section{Conclusion}
\label{sec:conclusion}
In this paper, we derived and studied a variational formulation of the celebrated local spectral clustering algorithm APPR in~\cite{ACL06}. Through this explicit formulation, we argued that an existing state-of-the-art optimization algorithm, i.e., ISTA \cite{sra2012optimization}, can be applied in a way as to result in a strongly local algorithm, which only requires access to a small portion of the graph. In addition, we showed that the running time of this algorithm only depends on the volume of non-zeros of the solution, as opposed to the entire graph. From a broader perspective, we hope that this variational viewpoint serves as a bridge across two seemingly disjoint fields of graph processing and numerical optimization, and allows one to leverage well-studied, numerically robust, and efficient optimization algorithms for processing today's large graphs. For example, one might be able to apply a modification of accelerated ISTA, i.e. FISTA~\cite{sra2012optimization} to further improve upon the efficiency of local graph clustering algorithms. This can indeed be a direction for future research, which we plan to undertake.

\section*{Acknowledgements}
MM would like to thank the Army Research Office and the Defense Advanced Research Projects Agency for partial support of this work.  
JS was supported by the Miller Institute for Basic Research in Science at UC Berkeley.\ We would also like acknowledge Guy Blelloch at Carnegie Mellon University for providing the machine used for experiments.
%\end{acknowledgements}

% BibTeX users please use one of
\bibliography{references}
\bibliographystyle{spmpsci}      % mathematics and physical sciences
%\bibliographystyle{spphys}       % APS-like style for physics
%\bibliography{}   % name your BibTeX data base

% Non-BibTeX users please use

\end{document}